\tikzset{sgplattice/.style={inner sep=1pt,norm/.style={red!50!blue},char/.style={blue!50!black},
		lin/.style={black!50}},cnj/.style={black!50,yshift=-2.5pt,left=-1pt of #1,scale=0.5,fill=white}}
\newcommand{\Mono}{\operatorname{Mono}}  
\newcommand{\Sq}{\operatorname{Sq}}  
\newcommand{\Adj}{\operatorname{Adj}}  
\newcommand{\Op}{\operatorname{Op}} 
\newcommand{\Ver}{\operatorname{Ver}}  
\newcommand{\Edg}{\operatorname{Edg}}  
\newcommand{\Tri}{\operatorname{Tri}}  
\newcommand{\Tet}{\operatorname{Tet}} 
\newcommand{\Sp}{\operatorname{Sp}}
\newcommand{\Sym}{\operatorname{Sym}}
\newcommand{\Free}{\operatorname{Free}}
\newcommand{\ap}{\operatorname{ap}}
\newcommand{\ZZ}{\mathbb{Z}}
\newcommand{\Aut}{\operatorname{Aut}}
\newcommand{\SAut}{\operatorname{SAut}}
\newcommand{\SL}{\operatorname{SL}}
\newcommand{\diag}{\operatorname{diag}}
\newcommand{\tr}{\operatorname{tr}}
\newtheorem{theorem}{Theorem}[section]
\newtheorem{definition}[theorem]{Definition}
\newtheorem{lemma}[theorem]{Lemma}
\newtheorem{proposition}[theorem]{Proposition}
\newtheorem{remark}[theorem]{Remark}
\newtheorem*{remark*}{Remark}
\newtheorem*{theorem*}{Theorem}
\newtheorem*{conjecture*}{Conjecture}
\newtheorem*{question*}{Question}
\numberwithin{table}{section}
\def\ps@pprintTitle{%
	\let\@oddhead\@empty
	\let\@evenhead\@empty
	\def\@oddfoot{}%
	\let\@evenfoot\@oddfoot
}
\date{}  
\begin{document}
\begin{frontmatter}


\title{Inducing spectral gaps\\
for the cohomological Laplacians of $\Sp_{2n}(\ZZ)$}



\author{Piotr Mizerka}
\author{Jakub Szymański}

\begin{abstract}
We show that the spectral gap of the first cohomological Laplacian $\Delta_1$ for $\Sp_{2n}(\mathbb{Z})$ follows once a slightly stronger assumption holds for some $\Sp_{2m}(\mathbb{Z})$, where $n\geq m$. As an application of this result, we provide explicit lower bounds for some quotients of  $\Sp_{2n}(\mathbb{Z})$ for any $n\geq 2$.
\end{abstract}




\end{frontmatter}



\setcounter{section}{-1}

\section{Introduction}
Recently computer-assisted proofs of property (T) have gained a lot of attention. One of the major outcomes of this approach was the proof of property (T) for $\Aut(F_n)$, the automorphism groups of free groups on $n$ letters for $n\geq 5$ \cite{kkn,kno}, as well as \cite{nitsche2022computerproofspropertyt} for $n=4$. Another advantage of such approach is providing explicit lower bounds for \emph{Kazhdan constants} which are closely related to the pace at which expander families defined by property (T) groups get connected and to the speed of generating random elements in groups \cite{LubotzkyPak,kkn}.

Suppose $G$ is a group generated by a finite symmetric set $S$ and define the Laplacian $\Delta=\frac{1}{2}\sum_{s\in S}s$. It is known that property (T) of $G$ is equivalent to the existence of a positive \emph{spectral gap} $\lambda$, i.e. a positive constant such that $\Delta^2-\lambda\Delta=\sum_i\xi_i^*\xi_i$ for some finite number of elements $\xi_i\in\mathbb{R}G$ \cite{OzawaKlasyk}. If such a positive $\lambda$ exists, it turns out to provide a lower bound for the Kazhdan constant $\kappa$:
$$
\kappa\geq\sqrt{\frac{2\lambda}{|S|}}.
$$
Recently, the work of U. Bader and P.W. Nowak \cite{badernowak} allowed to formulate another condition for property (T), involving explicitly the whole presentation of a given group. Let $G$ be a finitely presented group $G$ generated by a set $\mathcal{S}$. The \emph{Fox calculus} \cite{fox1,fox2}, its cohomological properties \cite{lyndon} and the results from \cite{badernowak} and \cite{bader2023higherkazhdanpropertyunitary} provide us with the Laplacian $\Delta_1\in\mathbb{M}_{|\mathcal{S}|\times|\mathcal{S}|}(\mathbb{R}G)$ (the precise definition of $\Delta_1$ is given in \cref{section:1}) such that the following conditions are equivalent:
\begin{enumerate}[label=(\arabic*)]
    \item There exists a positive $\lambda>0$ such that $\Delta_1-\lambda I_{|\mathcal{S}|}=\sum_i M_i^* M_i$ for some finite number of $M_i\in\mathbb{M}_{|\mathcal{S}|\times|\mathcal{S}|}(\mathbb{R}G)$.
    \item $G$ has property (T).
\end{enumerate}
\begin{remark}
\emph{The implication $(1)\Rightarrow(2)$ follows from the paper of U. Bader and P.W. Nowak \cite{badernowak}, while the converse implication from the recent paper of U. Bader and R. Sauer \cite{bader2023higherkazhdanpropertyunitary} (see Theorem 3.11 and the paragraph below it there). Originally, the assumption $(1)$ was proved in \cite{badernowak} to imply vanishing of the first cohomologies of a given group with coefficients in any unitary representations. The latter statement is known, however, to be equivalent to property (T) for finitely presented groups}.
\end{remark}

In this paper we are focused of $G_n=\Sp_{2n}(\mathbb{Z})$, the group of $2n\times 2n$ symplectic matrices over integers, that is, comprising of integer-valued matrices $A$ satisfying the relation
$$
A^TJ_{2n}A=J_{2n},
$$
where
$$
J_{2n}=\begin{bmatrix}
    0&I_n\\
    -I_n&0
\end{bmatrix}.
$$
These groups admit a presentation defined by Steinberg generators $\mathcal{S}_n$ and relations \cite{spnpres}. We develop an induction technique which allows to conclude the existence of positive $\lambda$ for $G_n$ once we know it for $G_m$ whenever $m\leq n$. This technique involves decomposing $\Delta_1$ into specific summands, with particular focus on the summand $\Adj$ (\cref{section:decomp}). The main theorem is the following.
\begin{theorem}[\Cref{theorem:main}]\label{theorem:mainintro}
Suppose there exists $\lambda>0$ such that $\Adj-\lambda I_{|\mathcal{S}_m|}$ is a sum of squares in $\mathbb{M}_{|\mathcal{S}_m|\times|\mathcal{S}_m|}(\mathbb{R}G_m)$. Then, for any $n\geq m$,
$$
\Delta_1-\frac{n-2}{m-2}\lambda I_{|\mathcal{S}_n|}
$$
is a sum of squares in $\mathbb{M}_{|\mathcal{S}_n|\times|\mathcal{S}_n|}(\mathbb{R}G_n)$, that is there exist matrices $M_1,\ldots,M_k\in\mathbb{M}_{|\mathcal{S}_n|\times|\mathcal{S}_n|}(\mathbb{R}G_n)$ such that $\Delta_1-\frac{n-2}{m-2}\lambda I_{|\mathcal{S}_n|}=M_1^*M_1+\ldots M_k^*M_k$.
\end{theorem}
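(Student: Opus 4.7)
The plan is to reduce the statement to a claim about the $\Adj$ summand alone, and then prove that claim by an averaging argument over naturally embedded copies of $G_m$ inside $G_n$. First, using the decomposition from \cref{section:decomp}, one writes $\Delta_1 = \Adj + R$, where $R$ is already a sum of squares in $\mathbb{M}_{|\mathcal{S}_n| \times |\mathcal{S}_n|}(\mathbb{R}G_n)$. It therefore suffices to produce a sum-of-squares certificate for
$$
\Adj_n - \tfrac{n-2}{m-2}\lambda\, I_{|\mathcal{S}_n|},
$$
since adding $R$ preserves the sum-of-squares property.

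To exploit the assumption, I would attach to every $m$-element subset $I \subset \{1,\ldots,n\}$ the natural embedding $\phi_I : G_m \hookrightarrow G_n$ which identifies the symplectic coordinates $(i, n+i)_{i=1}^m$ of $G_m$ with $(i_k, n+i_k)_{k=1}^m$ for the indices listed in $I$. This embedding sends Steinberg generators to Steinberg generators, and induces a $\ast$-homomorphism
$$
\tilde\phi_I \colon \mathbb{M}_{|\mathcal{S}_m| \times |\mathcal{S}_m|}(\mathbb{R}G_m) \longrightarrow \mathbb{M}_{|\mathcal{S}_n| \times |\mathcal{S}_n|}(\mathbb{R}G_n),
$$
obtained by placing the input matrix at the diagonal block indexed by $\phi_I(\mathcal{S}_m)\times\phi_I(\mathcal{S}_m)$ and filling the rest with zeros. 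In particular, $\tilde\phi_I$ preserves sums of squares.

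The combinatorial heart of the proof is the following pair of identities, to be verified from the definitions in \cref{section:decomp}. Each Steinberg generator of $G_n$ is supported on a pair of indices of $\{1,\ldots,n\}$, so a given generator lies in $\phi_I(\mathcal{S}_m)$ for exactly $\binom{n-2}{m-2}$ choices of $I$, yielding
$$
\sum_{|I|=m} \tilde\phi_I\!\left(I_{|\mathcal{S}_m|}\right) \;=\; \tbinom{n-2}{m-2}\, I_{|\mathcal{S}_n|}.
$$
On the other hand, the off-diagonal support of $\Adj_n$ consists of ordered pairs of generators sharing a common index, and so is jointly supported on three indices; each such pair lies in $\phi_I(\mathcal{S}_m)\times\phi_I(\mathcal{S}_m)$ for exactly $\binom{n-3}{m-3}$ subsets $I$, giving
$$
\sum_{|I|=m} \tilde\phi_I(\Adj_m) \;=\; \tbinom{n-3}{m-3}\, \Adj_n.
$$
Applying $\tilde\phi_I$ to the given decomposition $\Adj_m - \lambda I_{|\mathcal{S}_m|} = \sum_i M_i^* M_i$ and summing over all $\binom{n}{m}$ subsets then yields
$$
\tbinom{n-3}{m-3}\,\Adj_n \;-\; \lambda\, \tbinom{n-2}{m-2}\, I_{|\mathcal{S}_n|} \;=\; \sum_{|I|=m} \sum_i \tilde\phi_I(M_i)^*\, \tilde\phi_I(M_i),
$$
and dividing by $\binom{n-3}{m-3}$ produces the desired sum-of-squares certificate, since $\binom{n-2}{m-2}/\binom{n-3}{m-3} = (n-2)/(m-2)$.

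The main obstacle will be checking the two combinatorial identities cleanly against the exact definition of $\Adj$ from \cref{section:decomp}. Steinberg generators for $\Sp_{2n}$ come in long-root and short-root types, so the index-support count of a single generator may not always be a pair, and the counting constant $\binom{n-3}{m-3}$ for $\Adj$ depends on the precise shape of the ``adjacent'' off-diagonal pattern. A secondary technical point is confirming that $\tilde\phi_I$ is indeed a $\ast$-homomorphism into the block subalgebra (not merely a linear embedding), so that sums of squares are genuinely mapped to sums of squares at the matrix level.
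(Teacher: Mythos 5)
Your treatment of the $\Adj$ part is essentially the paper's own argument: summing over the $\binom{n}{m}$ embeddings $\phi_I$ is the same as averaging over the $\Sym_n$-translates of the canonically embedded $\Adj_m$ (\cref{lemma:nonid_sym}), and your count $\sum_{|I|=m}\tilde\phi_I(\Adj_m)=\binom{n-3}{m-3}\Adj_n$ is correct (for $m\geq 3$, which the hypothesis forces anyway). However, your first counting identity is false: the generators $Z_i,Z_i'$ have single-index support, so each lies in $\phi_I(\mathcal{S}_m)$ for $\binom{n-1}{m-1}$ subsets $I$, not $\binom{n-2}{m-2}$. The correct statement is $\sum_{|I|=m}\tilde\phi_I\left(I_{|\mathcal{S}_m|}\right)=\binom{n-2}{m-2}I_n^{\Sq}+\binom{n-1}{m-1}I_n^{\Mono}$, which is \cref{lemma:id_sym} up to the factor $m!(n-m)!$. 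This error is repairable, exactly as in the paper's proof of \cref{theorem:main}: since $\binom{n-1}{m-1}/\binom{n-3}{m-3}=\frac{(n-1)(n-2)}{(m-1)(m-2)}\geq\frac{n-2}{m-2}$, the surplus sits on the $Z$-diagonal and is a nonnegative multiple of $I_n^{\Mono}$ (the paper's term $\frac{n-m}{m-1}I_n^{\Mono}$), so it can be discarded. After this correction your averaging argument proves that $\Adj_n-\frac{n-2}{m-2}\lambda I_n$ is a sum of squares, i.e. the conclusion of \cref{theorem:main}.

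The genuine gap is your opening reduction, namely the claim that $R=\Delta_1-\Adj_n$ is already a sum of squares. The component $\Sq_n^-$ obstructs this: its diagonal entries at $Z_i$, $Z_i'$ vanish (the union of supports is a singleton there), while off-diagonal entries such as $(\Sq_n^-)_{Z_1,X_{1,2}}=(1-Z_1)(1-X_{1,2})^*$ do not; applying the left regular representation entrywise, a sum of squares becomes a positive semidefinite operator matrix, and positivity of the $2\times 2$ compression to $\{Z_1,X_{1,2}\}$ with a zero diagonal corner would force that off-diagonal operator to vanish, which it does not. Hence $\Sq_n^-$ is not a sum of squares, and nothing else in your remainder is shown to absorb it: the paper only establishes $\Mono_n^-\geq 0$ and $\Op_n^-+\Op_n^+\geq 0$ (\cref{mono_op_give_sos}), while the $\Sq^-$ part is handled either by the separate computer certificate $\Sq^-+\Delta_1^+-0.99\,I_3\geq 0$ of \cref{lemma:computations_H3} (and only for the quotient $H_3$) or by the order-unit argument of \cref{identity_is_an_order_unit}, in both cases at the cost of extra terms such as a large multiple $k\Delta_1^+$. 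This is precisely why \cref{theorem:main} is formulated at the level of $\Adj_n$ (resp. $\Adj_n^-+k\Delta_1^+$). Your proposal needs an ingredient of this kind to pass from the $\Adj_n$ certificate to the stated $\Delta_1$ conclusion; as written, that step fails.
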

Our induction technique is inspired by the analogous technique established for $\SL_n(\mathbb{Z})$ and $\SAut(F_n)$, the group of $n\times n$ integer matrices with determinant $1$ and the group of \emph{special atumorphisms} of the free group on $n$ generators respectively, see \cite{mizerka2024inducingspectralgapscohomological}. Both induction approaches are, in turn, inspired by the induction technique for the Ozawa's expression $\Delta^2-\lambda\Delta$, developed in \cite{kkn}. Recently, M. Kaluba and D. Kielak \cite{KalubaKielak2024} developed as well an induction techinque for $\Delta^2-\lambda\Delta$ for Chevalley groups, including $G_n$, allowing one to obtain concrete lower-bound estimates for such $\lambda$.

This paper is organized as follows. In the first section we introduce \emph{Fox derivatives} and define $\Delta_1$. Next, we present the Steinberg presentation of $\Sp_{2n}(\mathbb{Z})$. In \cref{section:decomp} we intruduce the decomposition of $\Delta_1$ into four summmands which behave better with respect to induction. \Cref{section:induction} is the core section of this article -- we show the aforementioned good behaviour of the summands. In \cref{section:gaps}, we apply our induction technique to $\Sp_{2n}(\mathbb{Z})$ which allows us to provide lower bounds for the spectral gap $\lambda$ for specific quotients $H_n$ of $\Sp_{2n}(\mathbb{Z})$. In Appendix, we comment on the computational part of our results: we describe \emph{Wedderburn} approach to accalerate the calculations and improve the \emph{certification} method so that worse accuracy of the numeric solution still can be turned into a rigorous proof.

To simplify the presentation of the material, we introduce the notation $M\geq N$ to denote the fact that the matrix difference $M-N$ is a sum of squares (the ambient space shall be clear from the context).

\section{Fox derivatives and the first cohomological Laplacian}\label{section:1}
 In this subsection, we define the first cohomological Laplacian $\Delta_1$ mentioned in the introduction. This goes back to Fox and Lyndon \cite{fox1,fox2,lyndon}. For any group $G$ and a ring $R$, there is the natural $*$-involution on $RG$, defined by flipping the coefficients of the group elements and their inverses. This $*$-involution induces the operation $*\colon \mathbb{M}_{k\times l}(RG)\rightarrow \mathbb{M}_{l\times k}(RG)$ by precomposing it with the matrix transposition. 
 
 Let $G=\langle s_1,\ldots,s_n|\mathcal{R}\rangle$ be a finitely presented group. Fix a subset $\{r_1,\ldots,r_m\}\subseteq\mathcal{R}$ of its relations. We distungiuish the following matrices in the group ring $\mathbb{Z}G$:
 \begin{gather*}
	d_0=\begin{bmatrix}
	    1-s_1\\
        \vdots\\
        1-s_n
	\end{bmatrix}\in\mathbb{M}_{n\times 1}(\mathbb{Z}G)\quad\text{and}\quad
	d_1=\begin{bmatrix}
	    \frac{\partial r_i}{\partial s_j}
	\end{bmatrix}\in\mathbb{M}_{m\times n}(\mathbb{Z}G).
\end{gather*}
The symbols $\frac{\partial r_i}{\partial s_j}$ denote the \emph{Fox derivatives}, defined as follows. Denote by $F_n$ the free group generated by $s_1,\ldots,s_n$. For each generator $s_j$, we distinguish the homomorphism $\frac{\partial}{\partial s_j}:\mathbb{Z}F_n\rightarrow\mathbb{Z}G$ determined by the following conditions:
\begin{gather*}
	\frac{\partial 1}{\partial s_j}=0,\quad
    \frac{\partial s_i}{\partial s_j}=\begin{cases}
		1 & \text{ if $i=j$,} \\
		0 & \text{if $i\neq j$}
	\end{cases},\quad
	\frac{\partial(uv)}{\partial s_j}=\frac{\partial u}{\partial s_j}+u\frac{\partial v}{\partial s_j}.
\end{gather*}
Then, we define 
\[
\Delta_1^-=d_0d_0^*,\quad\Delta_1^+=d_1^*d_1
\]
and
\[
\Delta_1=\Delta_1^-+\Delta_1^+.
\]
In the case $\{r_1,\ldots,r_m\}=\mathcal{R}$, Lyndon \cite{lyndon} showed that the first cohomology of $G$ with coefficients in a $G$-module $M$ is the kernel of $d_1$ divided by the image of $d_0$ from the sequence below.
$$
M\xrightarrow{d_0}M^n\xrightarrow{d_1}M^m.
$$
Though, for our applications we will have a strict inclusion $\{r_1,\ldots,r_m\}\subsetneq\mathcal{R}$, this still allows us to apply the result of Bader-Nowak \cite[The Main Theorem]{badernowak} due to \cite[Lemma 2.1]{kmn}.

\begin{remark}\label{remark:gap_inheritance}
    \emph{There exists a direct relationship between the spectral gap $\lambda$ from the Ozawa's expression $\Delta^2-\lambda\Delta$ and the one from the expression $\Delta_1-\lambda I$. More precisely, if $\mathcal{S}_n$ is a finite generating set of a finitely presented group $G$ such that $\mathcal{S}_n$ is disjoint from the set of its inverses, $\mathcal{S}_n^{-1}$, then, for any $k>0$, once $\Delta_1^-+k\Delta_1^+-\lambda I\geq 0$, then $\Delta^2-\lambda\Delta\geq 0$ as well. More details were discussed in \cite[Remark 2.2]{mizerka2024inducingspectralgapscohomological}}.
\end{remark}

\section{Presentation of $\Sp_{2n}(\mathbb{Z})$}

In this section, we introduce the presentation of $G_n=\Sp_{2n}(\mathbb{Z})$. Composing this presentation with the projection onto the quotient $H_n$, one obtains a presentation of the latter. The presentation of $G_n$ described below follows from \cite[section 2]{spnpres} and the works of Matsumoto and Behr \cite{Matsumoto,Behr}.

The generator set $\mathcal{S}_n$ of $G_n$ (and also of $H_n$ -- it shall be clear from the context whether we consider the generators as elements of $G_n$ or its quotient $H_n$) consists of the following $2n^2$ elements:
\begin{itemize}
    \item \( X_{i,j} \quad \textrm{for } 1 \leq i \neq j\leq n, \)
    
    \item \( Y_{i,j}, Y'_{i,j} \quad \textrm{for } 1 \leq i < j \leq n, \)
    
    \item \( Z_i, Z_i' \quad \textrm{for } 1 \leq i \leq n. \)
\end{itemize}

Abusing the notation for $i>j$ by letting $Y_{i,j}$ and $Y_{i,j}'$ be $Y_{j,i}$ and $Y_{j,i}'$ respectively, the set of relators $\mathcal{R}_n$ is the following.

\begin{itemize}
\item For $i\in\{1,\ldots,n\}$:
\begin{longtable}{p{0.4\textwidth}
    p{0.4\textwidth}}
    \( (Z_iZ_i'^{-1}Z_i)^4.  \)
    \end{longtable}

    \item For pairwise distinct $i,j \in \{1, …, n\}$:

    \begin{longtable}{p{0.4\textwidth}
    p{0.4\textwidth}}
    \( [X_{i,j},Y_{i,j}] Z_i^{-2},  \) & \( [X_{i,j}, Y_{i,j}'] Z_j'^2, \) \\
    \( [X_{i,j}, Z_j]Y_{i,j}^{-1}Z_i^{-1}\) & \( [X_{i,j}, Z_j]Z_i^{-1}Y_{i,j}^{-1}, \) \\
    \( [X_{i,j}, Z_i']Y_{i,j}'Z_j'^{-1}\) & \( [X_{i,j}, Z_i']Z_j'^{-1}Y_{i,j}', \) \\
    \( [Y_{i,j}, Z_i'] Z_jX_{j,i}^{-1}\) & \([Y_{i,j}, Z_i'] X_{j,i}^{-1}Z_j,  \) \\
    \( [Y_{i,j}', Z_i]Z_j'X_{i,j}\) & \([Y_{i,j}', Z_i]X_{i,j}Z_j'.\)

    \end{longtable}
    
    \item For pairwise distinct subscripts $i,j,k \in \{1,…,n\}$:
 
    \begin{longtable}{p{0.3\textwidth} p{0.3\textwidth} p{0.3\textwidth}}
        \( [X_{i,j}, X_{j,k}]X_{i,k}^{-1}, \) \\
        \( [X_{i,j}, Y_{j,k}]Y_{i,k}^{-1}, \) \\
        \( [X_{i,j}, Y_{i,k}']Y_{j,k}', \) \\
        \( [Y_{i,j}, Y_{j,k}']X_{i,k}^{-1}. \)
    \end{longtable}
    
    \item For all pairs of generators \((A,B)\) with not described commutator, except for \((X_{i,j}, X_{j,i})\), \((Y_{i,j}, Y'_{i,j})\), \((Z_i, Z'_i)\):
    \[
    [A,B].
    \]

\end{itemize}

The correspondence between the above generators and concrete matrix elements from $G_n$ is as follows:

\begin{itemize}
    \item \( X_{i,j} \leftrightarrow I_{2n} + E_{i,j} - E_{j+n, i+n}, \)
    
    \item \( Y_{i,j} \leftrightarrow I_{2n} + E_{i,j+n} + E_{j,i+n}, \quad Y'_{i,j} \leftrightarrow I_{2n} + E_{i+n,j} + E_{j+n,i}, \)
    
    \item \( Z_i \leftrightarrow I_{2n} + E_{i,i+n}, \quad Z_i' \leftrightarrow I_{2n} + E_{i+n,i}, \)
\end{itemize}
where $E_{i,j}$ denotes the $2n\times 2n$ matrix with the only non zero entry at the position $(i,j)$ which equals $1$.

The group $H_n$ is the quotient of $G_n$ by the commutator relators:
$$
[Z_i,Z_i']
$$
for any $i\in\{1,\ldots,n\}$.

\section{Decomposition of the Laplacians}\label{section:decomp}

Inspired by inductive proofs from \cite{kkn} and \cite{mizerka2024inducingspectralgapscohomological}, we introduce the decomposition of the Laplacians of $G_n$ and $H_n$ into parts originating from indices occurring in the coordinates in the case of $\Delta_1^-$ and in the relations defining Jacobian in the case of $\Delta_1^+$.

\[
\Delta_1^- = \Mono_n^- + \Sq_n^- + \Adj_n^- + \Op_n^-,
\]
\[
\Delta_1^+ = \Mono_n^++\Sq_n^+ + \Adj_n^+ + \Op_n^+.
\]
In order to carry out this procedure, let us define the map

\[
\varphi : \Free(\mathcal{S}_n) \longrightarrow \mathcal{P}(\{ 1, 2, …, n\})
\]
from the free group generated by $\mathcal{S}_n$ to subsets of the set $\{1,\ldots,n\}$ which assigns to a word the set of all indices of generators occurring in the reduced form of this word. \\

What is more, we wish to recognise the following subsets of $\mathcal{P}(\{ 1, 2, …, n\})$:
\begin{align*}
    \Ver_n = \left\{ A \in \mathcal{P}(\{ 1, 2, …, n\}) \mid \#A = 1\right\}, \\
    \Edg_n = \left\{ A \in \mathcal{P}(\{ 1, 2, …, n\}) \mid \#A = 2\right\}, \\
    \Tri_n = \left\{ A \in \mathcal{P}(\{ 1, 2, …, n\}) \mid \#A = 3\right\}, \\
    \Tet_n = \left\{ A \in \mathcal{P}(\{ 1, 2, …, n\}) \mid \#A = 4\right\}.
\end{align*}
Consequently, the decomposition of the negative part of the Laplacian is obtained as follows.
\begin{align*}
    \left( \Mono_n^- \right)_{s,t} = 
    \left( \Delta_1^- \right)_{s,t} \text{ if } \varphi(s) \cup \varphi(t) \in \Ver_n \text{ and } 0 \text{ otherwise,} \\
    \left( \Sq_n^- \right)_{s,t} = \left( \Delta_1^- \right)_{s,t} \text{ if } \varphi(s) \cup \varphi(t) \in \Edg_n \text{ and } 0 \text{ otherwise,} \\
    \left( \Adj_n^- \right)_{s,t} = \left( \Delta_1^- \right)_{s,t} \text{ if } \varphi(s) \cup \varphi(t) \in \Tri_n \text{ and } 0 \text{ otherwise,} \\
    \left( \Op_n^- \right)_{s,t} = \left( \Delta_1^- \right)_{s,t} \text{ if } \varphi(s) \cup \varphi(t) \in \Tet_n \text{ and } 0 \text{ otherwise,}
\end{align*}
and the decompoisition of the positive part is given by 

\begin{align*}
\left( \Mono_n^+ \right)_{s,t} = \sum_{\varphi(r) \in \Ver_n} \left( \frac{\partial r}{\partial s} \right)^* \frac{\partial r}{\partial t}, \\
    \left( \Sq_n^+ \right)_{s,t} = \sum_{\varphi(r) \in \Edg_n} \left( \frac{\partial r}{\partial s} \right)^* \frac{\partial r}{\partial t}, \\
    \left( \Adj_n^+ \right)_{s,t} = \sum_{\varphi(r) \in \Tri_n} \left( \frac{\partial r}{\partial s} \right)^* \frac{\partial r}{\partial t}, \\
    \left( \Op_n^+ \right)_{s,t} = \sum_{\varphi(r) \in \Tet_n} \left( \frac{\partial r}{\partial s} \right)^* \frac{\partial r}{\partial t},
\end{align*}
where $r$ is taken from the relators $\mathcal{R}_n$.

\section{Main induction argument}\label{section:induction}

In the following section we examine collectively the symmetrization properties of the matrices $\Mono_n^\pm, \Sq_n^\pm, \Adj_n^\pm, \Op_n^\pm$ which appear in the decompositions of $\Delta_1^-$ and $\Delta_1^+$.

We introduce an action of the symmetric group $\Sym_n$ on the set of generators $\mathcal{S}_n$ and the free group $\Free(\mathcal{S}_n)$ generated by them, by permuting the indices of the generators (e.g. $\sigma X_{i,j}=X_{\sigma(i),\sigma(j)}$, $\sigma Z_i=Z_{\sigma(i)}$). Furthermore, this action can be extended to the case of $\mathbb{M}_{2n^2 \times 2n^2}(\mathbb{R}G_n)$ by the following formula
\[
(\sigma A)_{s,t} = \sigma\left( A_{\sigma^{-1}s, \sigma^{-1}t} \right),
\]
where the action on $\mathbb{R}G_n$ is naturally given by the action on the generators.

\subsection{Expressing the action in the simplex setting}

Suppose $A^m$ is a matrix from the set $\{ \Mono^\pm_m, \Sq^\pm_m, \Adj^\pm_m, \Op^\pm_m \}$ and let $C_{A^m}$ be the set of the simplex faces defining $A^m$ (respectively, for the matrices inside this set it is: $\Ver_m, \Edg_m, \Tri_m, \Tet_m$).

Let us first assume that $A^m$ originates from $\Delta_1^-$. For $\theta \in C_{A^m}$ we define

\[
(A^m_{\theta})_{s,t} =
\begin{cases} 
(1 - s)(1 - t)^* \quad \text{ if } \varphi(s) \cup \varphi(t) = \theta, \\
0 \quad \text{ otherwise.}
\end{cases}
\]
For $\sigma \in \Sym_m$ the image of $A^m_\theta$ under the action of $\sigma$ equals $A^m_{\sigma(\theta)}$ which follows directly from the formula for symmetric group action. 

Next, for $A^m$ derived from $\Delta_1^+$ we define

\[
(A^m_{\theta})_{s,t} = \sum_{\varphi(r) = \theta} \left( \frac{\partial r}{\partial s} \right)^* \frac{\partial r}{\partial t}.
\]
Then, due to \cite[Lemma 5.2]{mizerka2024inducingspectralgapscohomological} and by the invariance of the relator set $\mathcal{R}_m$ under the action of $\Sym_m$, we have

\begin{align*}
    \left(\sigma(A^m_{\theta}) \right) _{s,t} &= \sigma \left( \sum_{\phi(r) = \theta} \left( \frac{\partial r}{\partial \sigma^{-1}s} \right)^* \frac{\partial r}{\partial \sigma^{-1}t} \right)  \\
    &= \sum_{\phi(r) = \sigma(\theta)} \left( \frac{\partial r}{\partial s} \right)^* \frac{\partial r}{\partial t} = ( A^m_{\sigma(\theta)} )_{s,t}.
\end{align*}
Hence, for any $A^m \in \{ \Mono^\pm_m, \Sq^\pm_m, \Adj^\pm_m, \Op^\pm_m \}$ we obtain the following formula:

\[
\sigma(A^m_\theta) = A^m_{\sigma(\theta)}.
\]
Therefore

\[
\sigma(A^m) = \sum_{\theta \in C_{A^m}} \sigma(A^m_\theta) = \sum_{\theta \in C_{A^m}} A^m_{\sigma(\theta)} = A^m,
\]
where the last equality holds because $C_{A^m}$ is invariant under permutations. Therefore, the matrices $\Mono^\pm_m, \Sq^\pm_m, \Adj^\pm_m, \Op^\pm_m  \in \mathbb{M}_{2m^2 \times 2m^2}(\mathbb{R}G_m)$ are invariant under the action of $\Sym_m$.

In what follows, we shall abuse the notations $\sigma(A^m)$ and $\sigma(A_{\theta}^m)$ in the case $\sigma\in\Sym_n$ for some $n\geq m$ and $\theta\in C_{A^m}$ to denote the result of the action of $\sigma$ on the canonical embeddings of $A^m,A_{\theta}^m\in\mathbb{M}_{2m^2\times 2m^2}(\mathbb{R}G_m)$ into $\mathbb{M}_{2n^2\times 2n^2}(\mathbb{R}G_n)$.

\subsection{Symmetrization of the Laplacian}

Let $A^m$ be again a matrix from the set $\{ \Mono^\pm_m, \Sq^\pm_m, \Adj^\pm_m, \Op^\pm_m \}$ and let $k_A$ be the power of the sets defining the simplex faces from $C_{A^m}$ (for the matrices from the mentioned set it is $1,2,3$, and $4$ respectively).

Recall that we have the following decomposition

\[
A^m = \sum_{\theta \in C_{A^m}} A^m_\theta.
\]
By considering a transformation of the above formula we will be able to describe symmetrization properties of the appropriate parts of the Laplacian.

Due to the orbit-stabilizer theorem applied to the action of $\Sym_m$ on $C_{A^m}$, we get

\[
A^m = \sum _{\sigma \in \Sym_m} \frac{1}{(m - k_A)!k_A!} A^m_{\sigma(\{1,…,k_A\})} =  \frac{1}{(m - k_A)!k_A!} \sum_{\sigma \in \Sym_m} \sigma(A^{k_A}).
\]
Suppose that we have a natural number $n \geq m$ and let $\left\{ \tau_i \right\}_{i = 1, …, \frac{n!}{m!}}$ be the set of representatives of the cosets $\Sym_n / \Sym_m$. Then we obtain the following.

\begin{lemma}\label{lemma:nonid_sym}
One has
$$
A^n=\frac{(m - k_A)!}{(n - k_A)! \cdot m!} \sum_{\tau \in \Sym_n} \tau\left( A^{m} \right).
$$
\end{lemma}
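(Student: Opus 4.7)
The plan is to reduce the statement to the orbit-stabilizer identity that the authors just derived for $A^m$, applied both at level $m$ and at level $n$, and then to perform a simple counting of how often each element of $\Sym_n$ appears when one composes permutations from $\Sym_n$ with permutations from $\Sym_m$.

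First I would observe that the derivation preceding the lemma works verbatim for $\Sym_n$ acting on $C_{A^n}$, since the stabilizer of the canonically embedded $A^{k_A}$ inside $\Sym_n$ is $\Sym_{k_A}\times\Sym_{n-k_A}$: the first factor acts trivially by the $\Sym_{k_A}$-invariance already proved, and the second factor acts trivially because it does not touch the indices in $\{1,\dots,k_A\}$. Consequently
$$
A^n \;=\; \frac{1}{(n-k_A)!\,k_A!}\sum_{\sigma\in\Sym_n}\sigma\bigl(A^{k_A}\bigr),
$$
while the formula already displayed for $A^m$ reads $A^m=\tfrac{1}{(m-k_A)!\,k_A!}\sum_{\sigma\in\Sym_m}\sigma(A^{k_A})$.

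Next I would substitute this expression for $A^m$ into the sum on the right-hand side of the lemma and use linearity of the $\Sym_n$-action:
$$
\sum_{\tau\in\Sym_n}\tau\bigl(A^m\bigr)
\;=\;\frac{1}{(m-k_A)!\,k_A!}\sum_{\tau\in\Sym_n}\sum_{\sigma\in\Sym_m}(\tau\sigma)\bigl(A^{k_A}\bigr),
$$
viewing $\Sym_m\subseteq\Sym_n$ as the pointwise stabilizer of $\{m+1,\dots,n\}$. The multiplication map $\Sym_n\times\Sym_m\to\Sym_n$, $(\tau,\sigma)\mapsto\tau\sigma$, has constant fiber size $m!$: for each $\pi\in\Sym_n$ the preimages are exactly the pairs $(\pi\sigma^{-1},\sigma)$ with $\sigma\in\Sym_m$. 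Hence the double sum collapses to $m!\sum_{\pi\in\Sym_n}\pi(A^{k_A})$, which by the level-$n$ identity above equals $m!\,(n-k_A)!\,k_A!\,A^n$.

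Combining these identifications gives
$$
\sum_{\tau\in\Sym_n}\tau\bigl(A^m\bigr)\;=\;\frac{m!\,(n-k_A)!}{(m-k_A)!}\,A^n,
$$
and solving for $A^n$ produces precisely the coefficient $\tfrac{(m-k_A)!}{(n-k_A)!\,m!}$ claimed in the lemma. The argument is essentially bookkeeping with factorials, and I do not anticipate a real obstacle; the only point warranting care is the verification of the stabilizer of the embedded $A^{k_A}$ under $\Sym_n$, which however follows immediately from the $\Sym_m$-invariance already established in the preceding subsection together with the fact that the embedding does not affect coordinates outside $\{1,\dots,k_A\}$.
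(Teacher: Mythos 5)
Your proof is correct and follows essentially the same route as the paper: both rest on the identity $A^{\ell}=\frac{1}{(\ell-k_A)!\,k_A!}\sum_{\sigma\in\Sym_{\ell}}\sigma\left(A^{k_A}\right)$ applied at levels $m$ and $n$, and differ only in bookkeeping — you count the $m!$-element fibers of the multiplication map $\Sym_n\times\Sym_m\to\Sym_n$, whereas the paper sums over coset representatives of $\Sym_n/\Sym_m$ and invokes the $\Sym_m$-invariance of $A^m$. No gap.
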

\begin{proof}
A direct computation, based on the previous observations:
\begin{align*}
    A^n &= \frac{1}{(n - k_A)!k_A!} \sum_{\sigma \in \Sym_n} \sigma\left(A^{k_A}\right) \\
    &= \frac{1}{(n - k_A)!k_A!} \sum_{i = 1, …, \frac{n!}{m!}} \tau_i \left( \sum_{\tau \in \Sym_m} \tau\left( A^{k_A} \right) \right) \\
    &= \frac{(m - k_A)!}{(n - k_A)!} \sum_{i = 1, …, \frac{n!}{m!}} \tau_i \left( A^m \right) \\
    &= \frac{(m - k_A)!}{(n - k_A)!} \sum_{i = 1, …, \frac{n!}{m!}} \tau_i \left( \frac{1}{m!} \sum_{\tau \in \Sym_m} \tau\left( A^{m} \right) \right) \\
    &= \frac{(m - k_A)!}{(n - k_A)! \cdot m!} \sum_{\tau \in \Sym_n} \tau\left( A^{m} \right).
\end{align*}
\end{proof}

\subsection{Symmetrization of identity matrix}

Let $I_n$ be the identity matrix inside $\mathbb{M}_{2n^2\times 2n^2}(\mathbb{R}G_n)$, that is $I_n$ is equal to $I_{|\mathcal{S}_n|}$ from the Introduction. Let us consider the following split of $I_n$ into two parts: $I_n^{\Mono}$ and $I_n^{\Sq}$ in which diagonal values are non-zero if and only if they are indexed by a generator mapping via $\varphi$ to $\Ver_n$ and $\Edg_n$ respectively. This gives the decomposition
\[
I_n = I_n^{\Mono} + I_n^{\Sq},
\]
with $2n$ nontrivial elements inside $I_n^{\Mono}$ and $2n(n-1)$ inside $I_n^{\Sq}$.

Further in this notes $I_m$, will also denote the canonical embedding of the identity matrix from $\mathbb{M}_{2m^2 \times 2m^2}(\mathbb{R}G_m)$ into $\mathbb{M}_{2n^2 \times 2n^2}(\mathbb{R}G_n)$ for an integer $n \geq m$. \\

We show the following.
\begin{lemma}\label{lemma:id_sym}
\[
\sum_{\sigma \in \Sym_n} \sigma I_m = m(m-1)(n-2)! \cdot I_n^{\Sq} + m(n-1)! \cdot I_n^{\Mono}.
\]
\end{lemma}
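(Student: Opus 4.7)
\medskip

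The plan is to compute the sum entry-by-entry, using the explicit form of $I_m$ viewed as an element of $\mathbb{M}_{2n^2 \times 2n^2}(\mathbb{R}G_n)$ and the formula for the $\Sym_n$-action. First observe that the canonical embedding of $I_m$ into $\mathbb{M}_{2n^2\times 2n^2}(\mathbb{R}G_n)$ is the diagonal matrix whose $(s,s)$-entry is $1$ precisely when $\varphi(s) \subseteq \{1,\ldots,m\}$, and $0$ otherwise. Unwinding the action formula
\[
(\sigma I_m)_{s,t} = \sigma\bigl((I_m)_{\sigma^{-1}s,\,\sigma^{-1}t}\bigr),
\]
one sees that $\sigma I_m$ is again diagonal, with $(\sigma I_m)_{s,s} = 1$ exactly when $\varphi(\sigma^{-1}s) \subseteq \{1,\ldots,m\}$, or equivalently $\varphi(s) \subseteq \sigma(\{1,\ldots,m\})$. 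Thus $\sum_{\sigma \in \Sym_n} \sigma I_m$ is diagonal, and the coefficient at position $(s,s)$ equals the number of $\sigma \in \Sym_n$ satisfying $\varphi(s) \subseteq \sigma(\{1,\ldots,m\})$.

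Next I would split the diagonal according to the type of generator, since $\varphi(s) \in \Ver_n \cup \Edg_n$ for every $s \in \mathcal{S}_n$ (the generators $Z_i, Z'_i$ involve a single index, while $X_{i,j}, Y_{i,j}, Y'_{i,j}$ involve two). For $s$ with $\varphi(s) = \{i\} \in \Ver_n$, the condition $i \in \sigma(\{1,\ldots,m\})$ means that $\sigma^{-1}(i)$ lies in $\{1,\ldots,m\}$; choosing $\sigma^{-1}(i)$ in $m$ ways and then extending arbitrarily to a bijection on the remaining $n-1$ points gives $m(n-1)!$ permutations. For $s$ with $\varphi(s) = \{i,j\} \in \Edg_n$, the condition $\{i,j\} \subseteq \sigma(\{1,\ldots,m\})$ means that the pair $(\sigma^{-1}(i),\sigma^{-1}(j))$ is an ordered pair of distinct elements of $\{1,\ldots,m\}$, of which there are $m(m-1)$, and extending to the remaining $n-2$ points contributes $(n-2)!$, giving $m(m-1)(n-2)!$ in total.

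Since these counts depend only on $|\varphi(s)|$, the diagonal sum $\sum_\sigma \sigma I_m$ has the constant value $m(n-1)!$ on all positions indexed by generators with $\varphi(s) \in \Ver_n$, and the constant value $m(m-1)(n-2)!$ on all positions indexed by generators with $\varphi(s) \in \Edg_n$. These are, by definition, the supports of $I_n^{\Mono}$ and $I_n^{\Sq}$ respectively, yielding the claimed identity. No step really plays the role of a serious obstacle; the only place one has to be mildly careful is in consistently translating between the condition $\varphi(\sigma^{-1}s) \subseteq \{1,\ldots,m\}$ and $\varphi(s) \subseteq \sigma(\{1,\ldots,m\})$ via $\varphi(\sigma^{-1}s) = \sigma^{-1}(\varphi(s))$, which follows from the definition of $\varphi$ and the $\Sym_n$-action on generators.
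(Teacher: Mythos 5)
Your proof is correct and follows essentially the same elementary counting idea as the paper's (very terse) argument: the paper splits $I_m = I_m^{\Mono}+I_m^{\Sq}$ and obtains the two scalars by comparing the total number of nonzero entries produced by the $n!$ permutations with the number of nonzero positions of $I_n^{\Mono}$ and $I_n^{\Sq}$, whereas you obtain the same scalars by counting, for each fixed diagonal position $s$, the permutations $\sigma$ with $\varphi(s)\subseteq\sigma(\{1,\ldots,m\})$. Your per-entry count is just a more explicit bookkeeping of the same computation, and all steps (the form of the embedded $I_m$, the identity $\varphi(\sigma^{-1}s)=\sigma^{-1}(\varphi(s))$, and the counts $m(n-1)!$ and $m(m-1)(n-2)!$) are accurate.
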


\begin{proof}
    Both $\sum_{\sigma \in \Sym_n} \sigma I_m^{\Mono}$ and $\sum_{\sigma \in \Sym_n} \sigma I_m^{\Sq}$ are multiplicities of $I_n^{\Mono}$ and $I_n^{\Sq}$ respectively. If we consider the power of $\Sym_n$ acting on these matrices and the numbers of non-zero entries in each of them we obtain the multiplication factors as in the above formula.
\end{proof}

\subsection{The main theorem}
Below we present the main theorem of this paper. This is the extended (and slightly stronger) version of the theorem from the introduction.
\begin{theorem}[\Cref{theorem:mainintro}]\label{theorem:main}
    Suppose $\Adj_m-\lambda I_m$ is a sum of squares in $\mathbb{M}_{2m^2\times 2m^2}(\mathbb{R}G_m)$ for some $m\geq 2$. Then $\Adj_n-\frac{n-2}{m-2}\lambda I_n$ is a sum of squares in $\mathbb{M}_{2n^2\times2n^2}(\mathbb{R}G_n)$ for any $n\geq m$. In the case only $\Adj_m^-+\Delta_1^+-\lambda I_m$ is a sum of squares in $\mathbb{M}_{2m^2\times 2m^2}(\mathbb{R}G_m)$ for some $m\geq 2$, a slightly weaker conclusion holds: $\Adj_n^-+k\Delta_1^+-\frac{n-2}{m-2}\lambda I_n$ is a sum of squares in $\mathbb{M}_{2n^2\times2n^2}(\mathbb{R}G_n)$ for any $n\geq m$ and $k$ sufficiently large.
\end{theorem}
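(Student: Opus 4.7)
The plan is to apply a symmetrization argument based on the $\Sym_n$-action on $\mathbb{M}_{2n^2 \times 2n^2}(\mathbb{R}G_n)$. Each $\tau \in \Sym_n$ acts as a $*$-algebra automorphism (permuting generators and their inverses consistently), so the sum-of-squares cone is preserved: whenever $X \geq 0$, each $\tau(X) \geq 0$ and therefore $\sum_{\tau \in \Sym_n} \tau(X) \geq 0$. I apply this to the canonical embedding of the hypothesis into $\mathbb{M}_{2n^2 \times 2n^2}(\mathbb{R}G_n)$ and then rewrite both sides using \Cref{lemma:nonid_sym} and \Cref{lemma:id_sym}.

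For the first statement, summing $\sum_\tau \tau(\Adj_m - \lambda I_m) \geq 0$ and invoking the two lemmas (with $k_A = 3$ in \Cref{lemma:nonid_sym}) produces
$$(n-3)!\, m(m-1)(m-2)\, \Adj_n \;\geq\; \lambda \bigl[m(m-1)(n-2)!\, I_n^{\Sq} + m(n-1)!\, I_n^{\Mono}\bigr].$$
Dividing by $(n-3)!\, m(m-1)(m-2)$ yields
$$\Adj_n \;\geq\; \lambda \left[\frac{n-2}{m-2}\, I_n^{\Sq} + \frac{(n-1)(n-2)}{(m-1)(m-2)}\, I_n^{\Mono}\right].$$
Since $\frac{(n-1)(n-2)}{(m-1)(m-2)} \geq \frac{n-2}{m-2}$ for $n \geq m$, the right-hand side dominates $\frac{n-2}{m-2}\lambda(I_n^{\Sq} + I_n^{\Mono}) = \frac{n-2}{m-2}\lambda I_n$, and the first conclusion follows.

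For the weaker version the same symmetrization applied to $\Adj_m^- + \Delta_1^+ - \lambda I_m \geq 0$ behaves differently on the positive part, since $\Delta_1^+ = \Mono_m^+ + \Sq_m^+ + \Adj_m^+ + \Op_m^+$ and \Cref{lemma:nonid_sym} contributes a different scaling factor for each $k_A \in \{1,2,3,4\}$. After the same normalization this yields
$$\Adj_n^- + \tfrac{(n-1)(n-2)}{(m-1)(m-2)}\Mono_n^+ + \tfrac{n-2}{m-2}\Sq_n^+ + \Adj_n^+ + \tfrac{m-3}{n-3}\Op_n^+ \;\geq\; \tfrac{n-2}{m-2}\lambda I_n.$$
Since each of $\Mono_n^+, \Sq_n^+, \Adj_n^+, \Op_n^+$ is itself a sum of squares (being of Gram form $\sum_r (\partial r/\partial s)^* (\partial r/\partial t)$), choosing $k$ at least as large as the maximum of the four coefficients on the left bounds the entire combination above by $k\Delta_1^+$, yielding the conclusion.

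The main obstacle is bookkeeping through the identity: under symmetrization its two components $I_n^{\Sq}$ and $I_n^{\Mono}$ acquire the non-proportional multiplicities $m(m-1)(n-2)!$ and $m(n-1)!$, so after normalization they contribute the distinct coefficients $\frac{n-2}{m-2}$ and $\frac{(n-1)(n-2)}{(m-1)(m-2)}$. The inequality $\frac{n-1}{m-1} \geq 1$ for $n \geq m$ is precisely what allows one to collapse the two-component bound into the clean single-scalar statement with factor $\frac{n-2}{m-2}$; without this comparison the theorem would be forced to quote the larger and less useful constant.
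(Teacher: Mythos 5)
Your proof is correct and follows essentially the same route as the paper: symmetrize the hypothesis over $\Sym_n$, apply \Cref{lemma:nonid_sym} and \Cref{lemma:id_sym}, and use that the $I_n^{\Mono}$-coefficient $\frac{(n-1)(n-2)}{(m-1)(m-2)}$ dominates the $I_n^{\Sq}$-coefficient $\frac{n-2}{m-2}$, with the surplus being a sum of squares. Your componentwise bookkeeping for the positive part even recovers the same value $k=\frac{(n-1)(n-2)}{(m-1)(m-2)}$ that the paper obtains by bounding the symmetrized $\Delta_m^+$ by a multiple of $\Delta_n^+$ all at once.
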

\begin{proof}
Suppose $\Adj_m-\lambda I_m$ is a sum of squares in $\mathbb{M}_{2m^2\times 2m^2}(\mathbb{R}G_m)$ for some $m\geq 2$. By \Cref{lemma:nonid_sym} we have 
\[
\Adj_n=\frac{(m-3)!}{(n-2)!\cdot m!}\sum_{\sigma\in\Sym_n}\sigma(\Adj_m).
\]
Thus, by \Cref{lemma:id_sym}, 
\begin{align*}
\Adj_n-\frac{n-2}{m-2}\cdot \lambda I_n&=\frac{(m-3)!}{(n-3)!\cdot m!}\sum_{\sigma\in\Sym_n}\sigma(\Adj_m-\lambda I_m)+\frac{n-m}{m-1}\cdot I_n^{\Mono}\\
&\geq \frac{(m-3)!}{(n-3)!\cdot m!}\sum_{\sigma\in\Sym_n}\sigma(\Adj_m-\lambda I_m)\geq 0.  
\end{align*}
The second part can be proved similarly with an additional asymptotic argument. Analogously as is in the first part:
\begin{align}\label{align:adjn}
\Adj_n^--\frac{n-2}{m-2}\cdot \lambda I_n\geq \frac{(m-3)!}{(n-3)!\cdot m!}\sum_{\sigma\in\Sym_n}\sigma(\Adj_m^--\lambda I_m).
\end{align}
Now, by \Cref{lemma:nonid_sym} and \Cref{lemma:id_sym}, we obtain
\begin{align*}
\frac{(n-1)(n-2)}{(m-2)(m-1)}\Delta_n^+&=\frac{(n-1)(n-2)}{(m-2)(m-1)}\left(\Mono_n^++\Sq_n^++\Adj_n^++\Op_n^+\right)\\
&\geq\frac{(m-3)!}{(n-3)!\cdot m!}\sum_{\sigma\in\Sym_n}\sigma(\Delta_m^+).  
\end{align*}
The last inequality in connection with inequality (\ref{align:adjn}) yields the assertion for $k=(n-1)(n-2)/(m-2)(m-1)$.
\end{proof}
\section{Application to spectral gaps related to $\Sp_{2n}(\mathbb{Z})$}\label{section:gaps}
In this section we show, using computer calulations, lower bounds for positive $\lambda$ such that $\Delta_1-\lambda I$ is a sum of squares for $G_n=\Sp_{2n}(\mathbb{Z})$ for $n\in\{2,3\}$ and $H_n=\Sp_{2n}(\mathbb{Z})/\langle [Z_i,Z_i']\rangle$ for any $n\geq 2$.
\begin{lemma}\label{mono_op_give_sos}
    The matrices $\Mono^-_n$ and $\Op^+_n + \Op^-_n$ are sums of squares.
\end{lemma}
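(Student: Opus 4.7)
The plan is a direct, entrywise computation for each of the two matrices, exploiting the explicit Steinberg presentation of $G_n$ and the commutativity relations it forces.

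For $\Mono^-_n$, I first observe that among the generators in $\mathcal{S}_n$ only $Z_i$ and $Z_i'$ have $\varphi$-support of size one, so the condition $\varphi(s)\cup\varphi(t)\in\Ver_n$ forces $s,t\in\{Z_i,Z_i'\}$ for a common $i$. Hence $\Mono^-_n$ is block-diagonal with $n$ rank-one positive semidefinite blocks of the form $v_iv_i^*$, where $v_i=((1-Z_i),(1-Z_i'))^T$. Each such block is manifestly a square by taking $M_i\in\mathbb{M}_{2n^2\times 2n^2}(\mathbb{R}G_n)$ to be the matrix whose unique nonzero row places $(1-Z_i)^*$ and $(1-Z_i')^*$ at columns $Z_i$ and $Z_i'$; then $\Mono^-_n=\sum_{i=1}^n M_i^*M_i$.

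For $\Op^+_n+\Op^-_n$, I note that the only relators in $\mathcal{R}_n$ with $|\varphi(r)|=4$ are commutators $[A,B]$ where $A,B\in\{X_{i,j},Y_{i,j},Y_{i,j}'\}$ have disjoint $2$-element supports. Because $[A,B]=1$ in $G_n$, the Fox derivatives simplify in $\mathbb{Z}G_n$ to $\frac{\partial[A,B]}{\partial A}=1-B$ and $\frac{\partial[A,B]}{\partial B}=A-1$, with all other partials vanishing. The key step is to show that every off-diagonal entry of $\Op^+_n+\Op^-_n$ is zero. For $s\neq t$ with disjoint $2$-element supports, the unique relator $[s,t]$ contributes $(1-t^{-1})(s-1)$ to $(\Op^+_n)_{s,t}$, while $(\Op^-_n)_{s,t}=(1-s)(1-t^{-1})$; since $s$ and $t$ commute in $G_n$, the elements $(1-s)$ and $(1-t^{-1})$ commute in $\mathbb{R}G_n$, so their sum equals $(1-t^{-1})[(1-s)+(s-1)]=0$. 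All other off-diagonal positions are already zero in both matrices by their support conditions.

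Consequently $\Op^+_n+\Op^-_n$ is a diagonal matrix whose $(s,s)$-entry is a sum of terms $(1-t)^*(1-t)$ indexed by the $4$-support relators involving $s$. For each contributing pair $(s,t)$, let $D_{s,t}\in\mathbb{M}_{2n^2\times 2n^2}(\mathbb{R}G_n)$ be the matrix with entry $1-t$ at some fixed position $(r_0,s)$ and zero elsewhere; then $D_{s,t}^*D_{s,t}$ has $(1-t)^*(1-t)$ at position $(s,s)$ and vanishes elsewhere, so summing over all such pairs reproduces $\Op^+_n+\Op^-_n$ as a sum of squares. The main subtle point is the bookkeeping of commutator relators: the off-diagonal cancellation relies on each unordered pair $\{A,B\}$ of disjoint $2$-support generators contributing exactly one commutator to $\mathcal{R}_n$, which is the convention of the Steinberg presentation cited; any double counting would leave a residual $(1-t^{-1})(s-1)$ off-diagonal term that would have to be absorbed separately. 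Beyond that, all steps are routine manipulations of Fox derivatives and the $\ast$-involution.
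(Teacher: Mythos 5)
Your treatment of $\Mono^-_n$ is correct and is essentially the paper's own argument (the paper writes $\Mono^-_n=\sum_{i=1}^n d^i(d^i)^*$ with $(d^i)_s=1-s$ for $\varphi(s)=\{i\}$, which is your block decomposition).

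The second half has a genuine gap, and it sits exactly at the point you flagged as "bookkeeping". In the paper's relator set $\mathcal{R}_n$ each commuting pair with disjoint two-element supports contributes \emph{both} orientations of the commutator: the paper's own computation of this entry is $(\Op^+_n)_{s,t}=\bigl(\frac{\partial [s,t]}{\partial s}\bigr)^*\frac{\partial [s,t]}{\partial t}+\bigl(\frac{\partial [t,s]}{\partial s}\bigr)^*\frac{\partial [t,s]}{\partial t}=-2(1-s)(1-t)^*$, i.e.\ twice the single contribution you use. Consequently $(\Op^+_n+\Op^-_n)_{s,t}=-(1-s)(1-t)^*\neq 0$, the matrix is \emph{not} diagonal, and your sum of single-entry squares $D_{s,t}^*D_{s,t}$ reproduces only the diagonal, not the full matrix; so under the conventions actually used in the paper your argument does not prove the lemma. (Under a one-commutator-per-unordered-pair convention your cancellation computation is fine, but that is not the $\Op^\pm_n$ the paper works with.) The paper's way around this does not need the off-diagonal to vanish at all: since $\Op^-_n$ has zero diagonal and its off-diagonal entries equal $-\frac{1}{2}$ times those of $\Op^+_n$, one has $\Op^+_n+\Op^-_n=\frac{1}{2}\diag(\Op^+_n)+\frac{1}{2}\Op^+_n$, and both summands are sums of squares --- $\Op^+_n$ by its very definition as $\sum_r\bigl(\frac{\partial r}{\partial s}\bigr)^*\frac{\partial r}{\partial t}$, i.e.\ a sum of terms $v_r^*v_r$ with $v_r$ the row of Fox derivatives of $r$, and $\diag(\Op^+_n)$ by the single-entry trick you already use. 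That identity is also insensitive to whether one or both orientations of each commutator are included, so it repairs your argument with minimal change: keep your computation of the Fox derivatives, but absorb the residual off-diagonal terms into $\frac{1}{2}\Op^+_n$ instead of claiming they cancel.
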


\begin{proof}
    For each index $i \in\{1, 2, … n \}$  let us define $d^{i}$ as the following column vector inside $\mathbb{M}_{2n^2 \times 1}(\mathbb{R}G_n)$
    \[
    (d^{i})_s = \begin{cases}
        1 - s \quad \text{if } \varphi(s)=\{i\}, \\
        0  \quad \text{otherwise}.
    \end{cases}
    \]
    Then, we can present $\Mono^-_n$ as the following sum of squares

    \[
    \Mono^-_n = \sum_{i=1}^n d^{i} (d^{i})^*.
    \]
    Next, for any pair of generators $s,t$ such that $\varphi(s) \cup \varphi(t) \in \Tet_n$, we have:
    \begin{align*}
    (\Op^+_n)_{s,t} &= \left(\frac{\partial [s,t]}{\partial s}\right)^*\frac{\partial [s,t]}{\partial t} + \left(\frac{\partial [t,s]}{\partial s}\right)^*\frac{\partial [t,s]}{\partial t} \\
    &= -2(1-s)(1-t)^* = -2 (\Op_n^-)_{s,t}
    \end{align*}
    and for other generators $s\neq t$, we get $(\Op^+_n)_{s,t} = 0$. Moreover, since the elements on the diagonal of $\Op_n^-$ equal zero and the diagonal part of $\Op_n^+$, denoted by $\diag(\Op_n^+)$ is a sum of squares, we obtain
    \[
    \Op_n^+ + \Op_n^- = \frac{1}{2}\diag(\Op_n^+) + \frac{1}{2}\Op_n^+ \geq 0.
    \]

\end{proof}

A direct computation on a computer showed the following.
\begin{lemma}\label{lemma:computations_H3}
    $\Adj^-+\Delta_1^+-0.24I_3$ and $\Sq^-+\Delta_1^+-0.99 I_3$ are a sums of squares in $\mathbb{M}_{18\times18}(\mathbb{R}H_3)$.
\end{lemma}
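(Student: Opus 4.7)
The statement is of a purely computational nature, so the proof plan is a pipeline: set up finite-dimensional SDPs whose solutions yield the required sum of squares decompositions, solve them numerically exploiting the $\Sym_3$-symmetry, and then certify the numerical output rigorously.

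First I would explicitly compute, in the group ring $\mathbb{R}H_3$, the boundary $d_0 \in \mathbb{M}_{18\times 1}(\mathbb{R}H_3)$ and the Jacobian $d_1 \in \mathbb{M}_{|\mathcal{R}_3|\times 18}(\mathbb{R}H_3)$ using the Fox derivative rules from \cref{section:1}, applied term by term to the Steinberg relators listed in Section~1 (together with the additional commutators $[Z_i,Z_i']$ passing to the quotient). From these one assembles $\Delta_1^-=d_0d_0^*$ and $\Delta_1^+=d_1^*d_1$, and then extracts the four summands $\Mono^\pm_3, \Sq^\pm_3, \Adj^\pm_3, \Op^\pm_3$ by filtering each entry according to the support map $\varphi$ from \cref{section:decomp}.

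Next, to search for sum of squares decompositions of $\Adj^-_3+\Delta_1^+-0.24 I_3$ and $\Sq^-_3+\Delta_1^+-0.99 I_3$, I would fix a symmetric ball $B\subseteq H_3$ of sufficiently large radius in the Cayley graph with respect to $\mathcal{S}_3$, look for decompositions of the form $\sum_i M_i^*M_i$ with each $M_i$ supported in $B$, and reformulate this as an SDP feasibility problem: a matrix $E$ in the group ring decomposes as $\sum_i M_i^*M_i$ iff the corresponding linear constraints on a positive semidefinite Gram matrix $P$ indexed by the basis elements of $B$ are satisfied. To keep the SDP tractable despite the $18\times 18$ matrix-valued setting, I would exploit invariance under the $\Sym_3$ action (which fixes both $\Adj^-_3+\Delta_1^+$ and $\Sq^-_3+\Delta_1^+$ and also $I_3$) and apply the Wedderburn approach announced in the Appendix to block-diagonalize $P$ into smaller isotypic components, one per irreducible representation of $\Sym_3$.

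Having obtained a numerical feasible point $P_{\mathrm{num}}\succeq 0$ from the solver, the hard and error-prone step, which the promised improved certification method is designed to handle, is the rigorous certification: the $\lambda$ in the statement (e.g.\ $0.24$) is deliberately taken a little below the numerical optimum so that there is room for a verified residual. Concretely, I would project $P_{\mathrm{num}}$ to a nearby $P_{\mathrm{cert}}$ lying in the correct symmetry-adapted affine subspace, ensure $P_{\mathrm{cert}}\succeq 0$ by an interval-arithmetic eigenvalue bound (shifting by the spectral radius of the numerical error if necessary), and then bound the group-ring norm of the residual $E - \langle \xi, P_{\mathrm{cert}}\xi\rangle$ rigorously, using interval arithmetic to control all floating-point operations; the bound on this residual must be smaller than the slack that the chosen $\lambda$ leaves below the true optimum, which is what makes the two constants $0.24$ and $0.99$ admissible. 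The main obstacles I anticipate are the size of the unsymmetrized SDP (making the Wedderburn reduction essential rather than merely cosmetic) and tuning the radius of $B$ and the slack in $\lambda$ so that a rigorously certifiable decomposition exists; once both are calibrated, the verification is mechanical.
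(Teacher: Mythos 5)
Your proposal matches the paper's approach: the lemma is established there by exactly this pipeline --- assembling $\Delta_1^-$ and $\Delta_1^+$ via Fox calculus, setting up the semidefinite feasibility problem over a finite support, block-diagonalizing it with the $\Sym_3$-symmetric Wedderburn decomposition, solving numerically (JuMP/SCS), and certifying the residual rigorously with interval arithmetic and the order-unit estimate from the Appendix. The only cosmetic difference is that the paper performs the computation in (a subspace of) $\mathbb{R}G_3$ and passes to the quotient $H_3$, rather than working in $\mathbb{R}H_3$ directly.
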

\begin{remark}
    \emph{Due to a a usage of a direct implementation of symplectic groups from \emph{Groups.jl} package of M. Kaluba \cite{kno,kkn}, the computations in the lemma above were done over a subspace of the group ring $\mathbb{R}G_n$ instead of $\mathbb{R}H_n$. This implies the desired statements for $H_n$ as it is a quotient of $G_n$.}
\end{remark}
\Cref{lemma:computations_H3} leads us, as a corollary, to the following.

\begin{theorem}
The element $\Delta_1^-+k\Delta_1^+-\left((n-2)0.24+0.99\right)I_n$ is a sum of squares in $\mathbb{M}_{2n^2\times 2n^2}(\mathbb{R}H_n)$ for any $n\geq 2$ and $k$ sufficiently large.    
\end{theorem}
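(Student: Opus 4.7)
The plan is to decompose $\Delta_1^- = \Mono_n^- + \Sq_n^- + \Adj_n^- + \Op_n^-$ and estimate each summand against a suitable share of the positive part $k\Delta_1^+$. \Cref{mono_op_give_sos} handles two of the four pieces immediately: $\Mono_n^-$ is already a sum of squares, and for any $k \geq 1$ one writes
\[
k\Op_n^+ + \Op_n^- = (\Op_n^+ + \Op_n^-) + (k-1)\Op_n^+ \geq 0,
\]
both summands on the right being sums of squares. So it suffices to exhibit
\[
\Sq_n^- + \Adj_n^- + k\Delta_1^+ - \bigl((n-2)\cdot 0.24 + 0.99\bigr) I_n \geq 0
\]
for $k$ large.

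For the adjacency contribution I would invoke the second assertion of \Cref{theorem:main} with $m = 3$ and $\lambda = 0.24$, fed by the $H_3$ certificate of $\Adj_3^- + \Delta_1^+ - 0.24\,I_3$ from \Cref{lemma:computations_H3}. This produces, for every $n \geq 3$ and every sufficiently large $k$,
\[
\Adj_n^- + k\Delta_1^+ - (n-2)\cdot 0.24\, I_n \geq 0.
\]
For the square contribution I would mimic the proof of \Cref{theorem:main} verbatim, with $\Adj$ replaced by $\Sq$ and the simplex dimension $k_A=3$ replaced by $k_A=2$. Applying \Cref{lemma:nonid_sym} with $A = \Sq^-$, $m = 3$, together with \Cref{lemma:id_sym}, yields
\[
\Sq_n^- - 0.99\,I_n = \tfrac{1}{6(n-2)!} \sum_{\sigma \in \Sym_n} \sigma(\Sq_3^- - 0.99\,I_3) + 0.99\cdot\tfrac{n-3}{2}\,I_n^{\Mono},
\]
in which the $I_n^{\Mono}$ correction is a non-negative multiple of a sum of squares as soon as $n \geq 3$. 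Symmetrising the full $\Sq_3^- + \Delta_1^+ - 0.99\,I_3$ instead, and absorbing the resulting explicit positive combination of $\Mono_n^+$, $\Sq_n^+$, $\Adj_n^+$ into $k\Delta_1^+$ (each summand of $\Delta_1^+$ being itself a sum of squares), then gives $\Sq_n^- + k\Delta_1^+ - 0.99\,I_n \geq 0$.

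Adding the adjacency and square estimates and combining them with the monomial and octahedral contributions above proves the theorem for every $n \geq 3$. The case $n = 2$ is easier: since $\Adj_2^- = \Op_2^- = 0$, the target reduces to $\Mono_2^- + \Sq_2^- + k\Delta_1^+ - 0.99\,I_2 \geq 0$, which is handled by the same symmetrisation machinery, starting from a direct base-case computation $\Sq_2^- + \Delta_1^+ - 0.99\,I_2 \geq 0$. I expect the main obstacle to lie in formalising the $\Sq$-analogue of \Cref{theorem:main}, which is not recorded as a separate result earlier: one must verify carefully that the $I_n^{\Mono}$ correction produced by \Cref{lemma:id_sym} carries the correct sign, which is precisely why the constraint $n \geq m$ appears in that step.
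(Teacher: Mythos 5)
Your argument follows the paper's proof essentially step for step: the same four-fold decomposition of $\Delta_1^-$, the second part of \Cref{theorem:main} fed by the $H_3$ certificate of $\Adj_3^-+\Delta_1^+-0.24\,I_3$ for the $\Adj$ piece, the $\Sym_n$-symmetrization of $\Sq_3^-+\Delta_1^+-0.99\,I_3$ via \Cref{lemma:nonid_sym,lemma:id_sym} for the $\Sq$ piece (your correction term $0.99\cdot\tfrac{n-3}{2}\,I_n^{\Mono}$ and the absorption of the rescaled $\Mono_n^+,\Sq_n^+,\Adj_n^+$ into $k\Delta_1^+$ are exactly right), and \Cref{mono_op_give_sos} for $\Mono_n^-$ and $\Op_n^-+\Op_n^+$. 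The only divergence is at $n=2$: the symmetrization from $m=3$ is unavailable there (consistently, your $I_n^{\Mono}$ coefficient turns negative), and you instead postulate a base certificate $\Sq_2^-+\Delta_1^+-0.99\,I_2\geq 0$ over $H_2$, which is not recorded anywhere in the paper; the paper's own proof does not address $n=2$ separately and derives everything from the $H_3$ computation, so on this point you are making explicit an ingredient the paper leaves implicit but do not yourself supply. For $n\geq 3$ your proposal is complete and coincides with the paper's argument, being in fact more detailed on the $\Sq$ step, which the paper dispatches with a single ``similarly.''
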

\begin{proof}
It follows from \Cref{theorem:main} and \Cref{lemma:computations_H3} that there exists $k'\geq 0$ such that 
$$
\Adj_n^-+k\Delta_1^+-0.24(n-2)I_n\geq 0.
$$
Similarly, by \Cref{lemma:nonid_sym,lemma:id_sym} and \Cref{lemma:computations_H3}, we get 
$$
\Sq_n^-+\Delta_1^+-0.99I_n\geq 0.
$$
On the other hand, we have from \Cref{mono_op_give_sos} that $\Mono_n^-$ and $\Op_n^-+\Op_n^+$ are sums of squares. Summing the considered four non-negative expressions together, we conclude that the expression $\Delta_1^-+k\Delta_1^+-\left((n-2)0.24+0.99\right)I_n$ is a sum of squares in $\mathbb{M}_{2n^2\times 2n^2}(\mathbb{R}H_n)$ for any $n\geq 2$ and $k$ big enough.
\end{proof}

\begin{remark}
    \emph{The induction argument can also be performed without knowing that $\Sq^-+\Delta_1^+-0.99 I_3$ are a sums of squares in $\mathbb{M}_{18\times18}(\mathbb{R}H_3)$. In fact, since $\Sq_2^-$ is $*$-invariant, there exists a constant $\lambda_0$ such that $\Sq_2^- + \lambda_0 I_{2}$ (cf. \Cref{identity_is_an_order_unit})  is a sum of squares. Therefore, by \Cref{lemma:nonid_sym,lemma:id_sym} (we can apply the same argument as in \Cref{lemma:nonid_sym} to show that $\Sq_n^-$ and $\lambda_0 I_n^{\Sq}$ symmetrize at the same pace), we get that for any $n \geq 2$ the following expression also admits a sum of squares decomposition:
    \[
    \Sq_n^- + \lambda_0 I_n^{\Sq} + (n-1)\lambda_0 I_n^{\Mono}.
    \]
    Combining this with the symmetrized version of our base calculation for the $\Adj$ part and the non-negative summands from \Cref{mono_op_give_sos}, we obtain for $k$ sufficiently large:
    \begin{align*}
    0&\leq\left(\Adj_n^-+k\Delta_1^+-0.24(n-2)I_n^{\Sq}-0.24(n-1)(n-2)I_n^{\Mono}\right)+\left(\Sq_n^- + \lambda I_n^{\Sq} + (n-1)\lambda_0 I_n^{\Mono}\right)\\
    &\leq \Delta_1^-+k\Delta_1^+-0.24(n-2)I_n.
    \end{align*}
    The last inequality holds for $n$ big enough and requires adding $\Mono_n^-$ and $\Op_n^-+\Op_n^+$ which are non-negative.}
\end{remark}

On the other hand, applying another computer calculations, we were able to provide the lower bounds for $G_2$ and $G_3$.
\begin{theorem}\label{theorem:bounds_spn}
    $\Delta_1-0.82 I_2$ is a sum of squares in $\mathbb{M}_{8\times 8}(\mathbb{R}G_2)$ and $\Delta_1-0.99 I_3$ is a sum of squares in $\mathbb{M}_{18\times 18}(\mathbb{R}G_3)$.
\end{theorem}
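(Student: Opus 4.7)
The plan is to establish both bounds through a computer-assisted sum-of-squares certificate, following the general scheme used for property (T) in \cite{kkn,kno,nitsche2022computerproofspropertyt}. First, I would encode the Steinberg presentation of $G_2$ and $G_3$ explicitly (generators $\mathcal{S}_n$ and relators $\mathcal{R}_n$ as listed in Section~2), then compute the Fox derivatives $\partial r/\partial s$ for each $r\in\mathcal{R}_n$ and assemble the matrices $d_0$ and $d_1$ in $\mathbb{M}_{\bullet\times\bullet}(\mathbb{R}G_n)$. From these, $\Delta_1=d_0d_0^*+d_1^*d_1$ is obtained as a matrix with entries supported on a computable finite subset $E\subseteq G_n$. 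The aim is then to find, for a fixed truncation ball $B\subseteq G_n$ with $E\subseteq B\cdot B^{-1}$, a positive semidefinite matrix $P\in\mathbb{M}_{|\mathcal{S}_n|\cdot|B|}(\mathbb{R})$ and a vector of monomials $\xi$ indexed by $\mathcal{S}_n\times B$ such that $\xi^*P\xi=\Delta_1-\lambda I_n$ as an equality of matrix-valued elements in $\mathbb{R}G_n$; the existence of such $P$ is precisely the desired sum-of-squares decomposition.

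Next I would set this up as a semidefinite feasibility program: linear equality constraints on the entries of $P$ express the equality of group-ring coefficients for each pair of basis elements, and the constraint $P\succeq 0$ encodes the sum-of-squares structure. To keep the SDP tractable (especially for $n=3$, where $|\mathcal{S}_3|=18$ and the support ball quickly becomes large), I would invoke the Wedderburn approach described in the Appendix: the action of $\Sym_n$ permuting the indices of the generators (as in Section~3) commutes with $\Delta_1$, so by symmetry adaptation one may block-diagonalize the ambient matrix algebra into isotypic components and solve a much smaller SDP on each block.

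Once a numerical solution $\widetilde P\succeq 0$ is obtained with residual $R=\Delta_1-\lambda I_n-\xi^*\widetilde P\xi$ small in an appropriate norm, I would carry out the certification step: project $\widetilde P$ onto the affine subspace determined by the equality constraints, and then show rigorously (in exact or interval arithmetic) that the corrected matrix is still positive semidefinite and that the leftover discrepancy can be absorbed by a small decrease in $\lambda$. The improved certification procedure described in the Appendix should be used here so that the numerical accuracy achievable for the $G_3$ problem, despite its size, still yields a valid rigorous bound of $0.99$ (and analogously $0.82$ for $G_2$).

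The main obstacle will be the scale of the SDP for $G_3$: the dimension $18$ of the generating set combined with a support ball of half-radius at least $2$ (needed to accommodate the Jacobian entries arising from the longer Steinberg relators such as $(Z_iZ_i'^{-1}Z_i)^4$) produces a semidefinite program that is at the edge of what current solvers can handle to sufficient precision. Overcoming this is why the Wedderburn symmetry reduction and the strengthened certification are essential, and together they should bring the problem within reach and yield the claimed constants $0.82$ and $0.99$.
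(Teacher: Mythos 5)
Your proposal matches the paper's approach: Theorem~\ref{theorem:bounds_spn} is established there by exactly this kind of computer-assisted sum-of-squares certificate --- assembling $\Delta_1$ from the Steinberg presentation via Fox derivatives, solving the resulting semidefinite feasibility problem with the $\Sym_n$-symmetry (Wedderburn) reduction from the Appendix, and certifying the numerical solution rigorously in interval arithmetic using the improved bound of \Cref{identity_is_an_order_unit}. No essential difference from the paper's argument.
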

\begin{remark}
    \emph{It follows from \Cref{theorem:bounds_spn} and \Cref{remark:gap_inheritance} that the lower bounds for $\lambda$ in the expression $\Delta^2-\lambda\Delta$ from Ozawa's paper \cite{OzawaKlasyk} are $0.82$ and $0.99$ for $G_2$ and $G_3$ respectively. This still, however, is no better than the estimates provided by M. Kaluba and D. Kielak which are $1.41$ and $2.41$ respectively, see Theorem 3.12 and Theorem 3.16 from \cite{KalubaKielak2024}.}
\end{remark}
\subsection{Replication details}
We implemented the necessary code to define and compute the sums of squares decompositions of interest in Julia \cite{bezanson2017julia} programming language. The replication instructions can be found on the GitHub page of our repository \cite{MizerkaSzymanski}. 

The justification for transforming the numerical sum of squares approximation into the exact version is discussed in section 3.2 of \cite{kmn}. In this paper we improve this transformation method in Appendix B. Additional information on obtaining the numerical approximation is also available in \cite{kmn}. In the current article, we also apply an acceleration to finding such a numerical approximation by means of \emph{Wedderburn decomposition}. This is an application of the approach from \cite[section 2]{kno} to the matrix case over group rings. We discuss this approach in the appendix below. 

For the computations concerning the Wedderburn decomposition we have used \emph{SymbolicWedderburn.jl} package of M. Kaluba \cite{kno,kkn}. The necessary implementations are contained in "wedderburn.jl" in the "src" folder of our package \cite{MizerkaSzymanski}. Other dependencies used in our package are listed in "Project.toml" file from \cite{MizerkaSzymanski}. In particular, to ensure the rigor of our computations, we have applied \emph{IntervalArithmetic.jl} package \cite{IntervalArithmeticjl}, and for the numerical computations involved in the semi-positive definite optimization, we have used \emph{JuMP} \cite{Lubin2023} and \emph{SCS} solver \cite{ocpb:16}.
\newline
\appendix
\noindent\textbf{\noindent Appendix: Methods for faster calculation of spectral gaps and certification of the solution}

In this appendix, we present two methods for faster computation related to finding lower bounds for spectral gaps of cohomological Laplacians. The contents of the first part of the appendix was inspired by the unpublished notes of P.W. Nowak and the first author and the contents of the second part partially overlaps with the master thesis of the second author entitled "Spectral gaps for the Property (T) groups".

\section{Wedderburn decomposition for the matrix case}

Let $G=\langle s_1, ..., s_n | r_1, ..., r_k \rangle$. As shown in \cite{kmn}, given a matrix $M=M^*=[m_{i,j}]\in\mathbb{M}_{n}(\mathbb{R}G)$, the expression $M - \lambda I_n$ is a sum of squares for a constant $\lambda \in \mathbb{R}$ if and only if there exists a finite subset $E=\{g_1,\ldots,g_m\} \subseteq G$ for which the following set is non-empty:
    \begin{align*}
        F(M,\lambda) = \{ P \in \mathbb{M}_{nm}(\mathbb{R}) &\cong \mathbb{M}_{n}(\mathbb{R}) \otimes \mathbb{M}_{E}(\mathbb{R})\colon \quad \\
         &P \text{ is positive semidefinite  and }& \\
         & \forall_{i,j \in [n], g \in G} \hspace{0.5cm} m_{i,j}(g) - \delta_{i,j} \lambda = \langle E_{i,j} \otimes A_g, P \rangle \} \neq \emptyset, 
    \end{align*}
where $[n]=\{1,\ldots,n\}$, $\langle A,B\rangle=\tr(A^*B)$, and $A_g \in \mathbb{M}_E(\mathbb{R})$ is given for any $g \in G$ as follows 

\[
(A_g)_{g_i,g_j} = 
    \begin{cases}
        1 & \text{for } g_i^{*} g_j = g, \\
        0 & \text{otherwise.}
    \end{cases}
\]
The aim of this section is to generalize section 2 of \cite{kno} and use similar block-diagonalization in the case of our optimization problem for the first cohomological laplacian. 

We assume that a symmetric group $\Sigma$ acts on $G$ and that each element $\sigma \in \Sigma$ acts as a permutation on the set of generators, the set of relations, and the subset $E$.

\begin{definition}
    Let $X$ be either $\mathcal{S}=\{s_1,\ldots,s_n\}$ or $E$. One can define the following induced linear actions:
    \begin{align*}
        \Sigma \curvearrowright \mathbb{R}G &\colon \quad \sigma  \left( \sum_{g \in G} \lambda_g g \right) = \sum_{g \in G} \lambda_{\sigma ^{-1}(g)}g, \\
        \Sigma \curvearrowright \mathbb{M}_{X}(\mathbb{R}) &\colon \quad (\sigma T)_{g_i,g_j} = T_{\sigma^{-1}g_i,\sigma^{-1}g_j}.
    \end{align*}
\end{definition}
\noindent Based on the aforementioned actions we introduce also linear actions of $\Sigma$ on $\mathbb{M}_S(\mathbb{R}G) \cong \mathbb{M}_S(\mathbb{R}) \otimes \mathbb{R}G$ and $\mathbb{M}_{S \times E}(\mathbb{R}) \cong \mathbb{M}_S (\mathbb{R}) \otimes \mathbb{M}_E(\mathbb{R})$ by acting component-wise on each factor of the tensor products. 

The following result is a slight improvement of Corollary 3.5 from the work of \cite{kmn} if we take into account and use the $\Sigma$-action.

\begin{proposition}

    Let $M \in \mathbb{M}_{S}{(\mathbb{R}G)}$ be a $\Sigma$-invariant matrix which can be decomposed into a sum of squares. Then there exists a $\Sigma$-invariant positive semidefinite matrix $P \in \mathbb{M}_{S \times E}(\mathbb{R})$ such that the following equality holds
    \[
    M = x_E^* P x_E,
    \]
    where $x_E$ is defined by the following tensor product:
    \[
    x_E = I_n \otimes (g_1,…,g_m)^T \in \mathbb{M}_{mn \times n}(\mathbb{R}G).
    \]
\end{proposition}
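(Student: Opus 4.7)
The plan is a standard symmetrization (Reynolds‐operator) argument, the matrix analogue of what is done scalar‐wise in section~2 of \cite{kno}. First I would invoke the base characterization recalled just before the proposition: for some finite $E_0 \subseteq G$ there exists a positive semidefinite $P_0 \in \mathbb{M}_{S \times E_0}(\mathbb{R})$ with $M = x_{E_0}^* P_0 x_{E_0}$. Since $\Sigma$ is finite and acts on $G$, I would enlarge $E_0$ to $E := \Sigma \cdot E_0$ (still finite, now $\Sigma$-invariant) and extend $P_0$ by zeros on the new rows and columns; this preserves both positive semidefiniteness and the factorization. Then define the candidate
\[
P \;:=\; \frac{1}{|\Sigma|}\sum_{\sigma \in \Sigma}\sigma(P_0) \;\in\; \mathbb{M}_{S \times E}(\mathbb{R}),
\]
which is $\Sigma$-invariant by construction.

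Positive semidefiniteness of $P$ is easy: the action of each $\sigma$ on $\mathbb{M}_{S \times E}(\mathbb{R}) \cong \mathbb{M}_S(\mathbb{R}) \otimes \mathbb{M}_E(\mathbb{R})$ is conjugation by the permutation matrix $\pi_\sigma^S \otimes \pi_\sigma^E$, which is orthogonal, so each $\sigma(P_0)$ is positive semidefinite; an average of positive semidefinite matrices is positive semidefinite.

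The only real content is the equivariance of the evaluation map $Q \mapsto x_E^* Q x_E$, namely
\[
x_E^*\,\sigma(Q)\,x_E \;=\; \sigma\!\bigl(x_E^* Q x_E\bigr) \qquad (\sigma \in \Sigma,\ Q \in \mathbb{M}_{S \times E}(\mathbb{R})).
\]
I would prove this by a direct entry-wise computation. Unfolding the tensor structure of $x_E$ gives $(x_E^* Q x_E)_{s_i, s_k} = \sum_{p,q} g_p^*\, Q_{(s_i,g_p),(s_k,g_q)}\, g_q$, while the definition of the action yields $(\sigma Q)_{(s_i,g_p),(s_k,g_q)} = Q_{(\sigma^{-1}s_i,\sigma^{-1}g_p),(\sigma^{-1}s_k,\sigma^{-1}g_q)}$. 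Reindexing by $g_p = \sigma g_{p'}$, $g_q = \sigma g_{q'}$ and using that $\sigma$ acts as a $*$-ring homomorphism of $\mathbb{R}G$ (so $(\sigma g)^* = \sigma(g^*)$ and $\sigma$ commutes with the sum) collects the factors outside as $\sigma(\,\cdot\,)$, matching the action on $\mathbb{M}_S(\mathbb{R}G)$.

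Granting this identity and the hypothesis that $\sigma(M) = M$ for every $\sigma$, the conclusion is a one-line computation:
\[
x_E^* P x_E \;=\; \frac{1}{|\Sigma|}\sum_{\sigma \in \Sigma} x_E^* \sigma(P_0) x_E \;=\; \frac{1}{|\Sigma|}\sum_{\sigma \in \Sigma} \sigma\!\bigl(x_E^* P_0 x_E\bigr) \;=\; \frac{1}{|\Sigma|}\sum_{\sigma \in \Sigma} \sigma(M) \;=\; M.
\]
The main (modest) obstacle is the bookkeeping in the equivariance step: one must keep the two factors of the action on $\mathbb{M}_{S \times E}(\mathbb{R})$ straight and check that the $*$-involution on $\mathbb{R}G$ is compatible with the $\Sigma$-action, after which everything collapses to the classical averaging argument.
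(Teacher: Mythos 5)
Your argument is correct and is exactly the symmetrization the paper has in mind: the paper states this proposition without proof as a $\Sigma$-equivariant refinement of Corollary 3.5 of \cite{kmn}, and the intended justification is precisely the averaging of a Gram matrix $P_0$ over $\Sigma$ together with the equivariance of $Q\mapsto x_E^*Qx_E$, which you verify. The only cosmetic difference is your enlargement of the support set to $\Sigma\cdot E_0$, which is harmless (and unnecessary in the paper's setting, since $\Sigma$ is already assumed to permute $E$).
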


\begin{definition} \label{definition:sigma-repr}
    One can define the real representation $\varrho$ of $\Sigma$ on $\mathbb{R}(S \times E)$ as follows:
    \[
    \varrho(\sigma) \in \mathbb{M}_{S \times E}(\mathbb{R}), \quad 
    \varrho(\sigma)_{(i,g),(j,h)} =     \begin{cases}
        1 & \text{for } (i,g) = (\sigma(j), \sigma h), \\
        0 & \text{otherwise.}
    \end{cases}
    \]
    Then, let $A(\varrho)$ denote the real algebra generated by the elements $\{ \varrho(\sigma) \}_{\sigma \in \Sigma}$.
\end{definition}
According to Maschke's theorem, the representation $\varrho$ is semisimple. By the definition of the associated algebra $A(\varrho)$, it is isomorphic to the direct sum of real algebras corresponding to real irreducible representations of $\Sigma$:
\[
A(\varrho) \cong \bigoplus_{\pi \in \operatorname{Irr}^{\mathbb{R}}(\Sigma)} I_{m_\pi} \otimes A(\pi) \subseteq \mathbb{M}_{S \times E}(\mathbb{R}).
\]
Since the only endomorphisms of such representations are scalar multiplications (\cite[Chapter 4]{James}), also the following isomorphism of the commutant of $A(\varrho)$ holds:
\[
A(\varrho)'\cong \bigoplus_{\pi \in \operatorname{Irr}^{\mathbb{R}}(\Sigma)} \mathbb{M}_{m_\pi}(\mathbb{R}) \otimes I_{\operatorname{dim}(\pi)}.
\]
    
\begin{proposition}
    The set of $\Sigma$-invariant matrices $\mathbb{M}^\Sigma_{S \times E}(\mathbb{R})$ coincides with $A(\varrho)'$.  
\end{proposition}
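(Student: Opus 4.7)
The plan is to recognise that the $\Sigma$-action on $\mathbb{M}_{S\times E}(\mathbb{R})$ is nothing other than conjugation by the representation $\varrho$. Once this is established, $\Sigma$-invariance is equivalent to commuting with every $\varrho(\sigma)$, and hence to lying in the commutant of the algebra they generate.

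The first step is to identify $\mathbb{M}_{S\times E}(\mathbb{R})$ with $\mathbb{M}_S(\mathbb{R})\otimes \mathbb{M}_E(\mathbb{R})$ and unwind the componentwise action of $\sigma$ on this tensor product. If $P_\sigma^X$ denotes the permutation matrix of $\sigma$ acting on the set $X\in\{S,E\}$, a direct index computation shows that $(\sigma T)_{g_i,g_j}=T_{\sigma^{-1}g_i,\sigma^{-1}g_j}$ is exactly $P_\sigma^X T (P_\sigma^X)^{-1}$. Taking tensors, the action on $\mathbb{M}_{S\times E}(\mathbb{R})$ is $T\mapsto (P_\sigma^S\otimes P_\sigma^E)\,T\,(P_\sigma^S\otimes P_\sigma^E)^{-1}$ on simple tensors, and hence on all of $\mathbb{M}_{S\times E}(\mathbb{R})$ by linearity.

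The second step is to match $\varrho(\sigma)$ with the permutation matrix $P_\sigma^S\otimes P_\sigma^E$. Reading \Cref{definition:sigma-repr} on basis vectors shows that $\varrho(\sigma)e_{(j,h)}=e_{(\sigma(j),\sigma h)}$, which is precisely the permutation matrix of the diagonal action $\Sigma \curvearrowright S\times E$, i.e. $P_\sigma^S\otimes P_\sigma^E$. Combining this with the previous step gives the clean formula $\sigma T=\varrho(\sigma)\,T\,\varrho(\sigma)^{-1}$ for every $T\in\mathbb{M}_{S\times E}(\mathbb{R})$ and every $\sigma\in\Sigma$.

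The proposition now falls out by abstract nonsense: $T$ is $\Sigma$-invariant iff $\varrho(\sigma)T=T\varrho(\sigma)$ for all $\sigma\in\Sigma$, iff $T$ commutes with every element of the generating set $\{\varrho(\sigma)\}_{\sigma\in\Sigma}$, iff $T$ lies in the commutant of the subalgebra $A(\varrho)$ they generate. The last equivalence uses the standard fact that the commutant of any subset of a matrix algebra coincides with the commutant of the subalgebra it generates. There is no genuine obstacle here; the only thing to keep track of is the direction of $\sigma$ versus $\sigma^{-1}$ when translating between the permutation of indices and the conjugation convention, which has already been pinned down in the first step.
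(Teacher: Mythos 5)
Your proposal is correct and is essentially the paper's argument in a cleaner packaging: the paper verifies in block coordinates that the invariance condition $(\star)$ coincides with the entrywise condition $\varrho(\sigma)M=M\varrho(\sigma)$, which is exactly your identity $\sigma T=\varrho(\sigma)\,T\,\varrho(\sigma)^{-1}$ made explicit. Your only extra ingredient, that the commutant of the generating set $\{\varrho(\sigma)\}_{\sigma\in\Sigma}$ equals the commutant of $A(\varrho)$, is standard and used implicitly in the paper as well.
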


\begin{proof}
    First, let us consider the matrix $M = \sum_{i,j \in [n]} E_{i,j} \otimes M^{i,j} \in \mathbb{M}_{S \times E}^{\Sigma}(\mathbb{R})$. The invariance of M means that the following equalities hold for each $\sigma \in \Sigma$

    \begin{align*}
        \sum_{i,j \in [n]} E_{i,j} \otimes M^{i,j} &= \sigma\left(\sum_{i,j \in [n]} E_{i,j} \otimes M^{i,j}\right) \\
        &= \sum_{i,j \in [n]} E_{\sigma(i),\sigma(j)} \otimes \sigma M^{i,j} \\
        &= \sum_{i,j \in [n]} E_{i,j} \otimes \sigma M^{\sigma^{-1}(i),\sigma^{-1}(j)}.
    \end{align*}
    Therefore, we have the equality of the matrices $\sigma M^{\sigma^{-1}(i),\sigma^{-1}(j)}$ and $M^{i,j}$ for all $i,j \in [n]$, which implies that on each coordinate $(g,h) \in E^2$ we obtain

    \[
    M^{i,j}_{g,h} = \left( \sigma M^{\sigma^{-1}(i),\sigma^{-1}(j)} \right)_{g,h} = M^{\sigma^{-1}(i),\sigma^{-1}(j)}_{\sigma^{-1}(g),\sigma^{-1}(h)}. \tag{$\star$}
    \]
    On the other side, we may apply the same convention for block description in the case of matrices given by $\varrho$ and obtain the equality

    \[
    \varrho(\sigma) = \sum_{i \in [n], g \in G} E_{i, \sigma^{-1}(i)} \otimes E_{g, \sigma^{-1}(g)}.
    \]
    Thus, for a matrix $M = \sum_{i,j \in [n]} E_{i,j} \otimes M^{i,j} \in \mathbb{M}_{S \times E}(\mathbb{R})$ we can investigate the action of $\varrho$ from the both sides:
    \begin{align*}
        \varrho(\sigma) M &= \left(\sum_{i \in [n], g \in E} E_{i, \sigma^{-1}(i)} \otimes E_{g, \sigma^{-1}(g)}\right) \left(\sum_{i,j \in [n]} E_{i,j} \otimes M^{i,j} \right) \\
        &= \sum_{i,j \in [n], g \in E} E_{i,j} \otimes \left( E_{g, \sigma^{-1}(g)} \cdot M^{\sigma^{-1}(i),j} \right) \\
        M \varrho(\sigma) &= \sum_{i,j \in [n], h \in E}  E_{i,j} \otimes \left( M^{i, \sigma(j)} \cdot E_{\sigma(h), h} \right).
    \end{align*}
    The matrix $M$ is contained in $A(\varrho)'$ if and only if the above matrices are equal for each $\sigma \in \Sigma$. Based on the above calculations, it follows that the entry of $\varrho(\sigma)M = M \varrho(\sigma)$ with coordinates $\left((i,g),(j,h)\right)$ is equal to

    \[
    M^{\sigma^{-1}(i),j}_{\sigma^{-1}(g), h} =  M^{i,\sigma(j)}_{g,\sigma(h)}
    \]
    which is the same equation as ($\star$) up to the renaming of the variables, and so the stated equality holds.
\end{proof}

By the previous proposition, we have the isomorphism of $\Sigma$-invariant matrices and the algebra $\bigoplus_{\pi \in \operatorname{Irr}^{\mathbb{R}}(\Sigma)} \mathbb{M}_{m_\pi}(\mathbb{R}) \otimes I_{\operatorname{dim}(\pi)}$. 
Suppose that it is given by the following map: 
\[
\Theta\colon \mathbb{M}_{S \times E}^{\Sigma}(\mathbb{R}) \rightarrow \bigoplus_{\pi \in \operatorname{Irr}^{\mathbb{R}}(\Sigma)} \mathbb{M}_{m_\pi}(\mathbb{R}), \quad \Theta(M) = \bigoplus_{\pi \in \operatorname{Irr}^{\mathbb{R}}(\Sigma)} U_\pi M U_\pi^T.
\]

Then, basing on the matrices $U _\pi$, we can define the block semidefinite problem $\widetilde{F}(M,\lambda)$ and by the analogue of \cite[ Proposition 9]{kno} its solution will always imply the solution of the problem $F(M,\lambda)$:
\begin{align*}
    \widetilde{F}(M,\lambda) = \{ & \{P_\pi \in \mathbb{M}_{m_\pi}(\mathbb{R}) \}_{\pi \in \operatorname{Irr}^{\mathbb{R}}(\Sigma)}: \quad \\
     &\forall_{\pi} P_\pi \text{ are positive semidefinite and }  \\
     & m_{i,j}(g) - \delta_{i,j} \lambda = \sum_{\pi} \langle U_\pi A_{B} U_\pi^T, P_\pi \rangle \text{ for any orbit } B = [E_{i,j} \otimes A_g] \in \mathcal{B}/\Sigma \},  
\end{align*}
    where $\mathcal{B} = \left\{ E_{i,j} \otimes A_g: i,j \in [n] \text{ and } g \in E^*E \right\}$ and matrices $A_B$ are given as follows
    
    \[
    A_{B} = \frac{1}{|\Sigma|} \sum_{X \in B} X.
    \]





\section{Improved certification of the solution}
Let $G$ be a finitely presented group and $\Delta_1$ its first cohomological Laplacian. Recall (e.g. from section 3.2 of \cite{kmn}) that in order to turn a numerical approximation $(X_{\ap},\lambda_{\ap})$ of the Laplacian, $\Delta_1-\lambda_{\ap} I\approx X_{\ap}$, into a rigorous proof of the existence of a positive $\lambda$ such that $\Delta_1-\lambda I\geq 0$, we have to play with the residual error $r=\Delta_1-\lambda_{\ap} I-X_{\ap}$ (the operations are performed in \emph{interval arithmetic}). We can ensure that $X_{\ap}$ is a sum of squares which allows us to add a specific expression $r+\alpha_r I\geq 0$ to it. The crucial part is to have a good control over $\alpha_r$. This control estimate was introduced in \cite[Proposition 3.2]{kmn} and we improve it here.

\begin{definition}\label{definition:order_unit}
    Let $U = (u_1, …, u_n)$ be a vector with coefficients in a group ring $\mathbb{R}G$. Then the first norm of $U$ is given by
    \[
    ||U||_1 \coloneq \sum_{i=1}^n ||u_i||_1,
    \]
    where $||\sum_g\lambda_gg||_1=\sum_g|\lambda_g|$.
\end{definition}

\begin{theorem}\label{identity_is_an_order_unit}
    For any matrix $M=M^*\in\mathbb{M}_{n\times n}(\mathbb{R}G)$ with rows $M_1,\ldots,M_n$, the the matrix $M+\alpha_M I_n$ is a sum of squares, where
    \[
    \alpha_M = \max_{i = 1,…,n} ||M_i||_1.
    \]
\end{theorem}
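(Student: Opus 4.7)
The plan is to extend the scalar fact---that for a self-adjoint $x\in\mathbb{R}G$ the element $x+\|x\|_1$ is a sum of squares---to the matrix setting by splitting $M$ into self-adjoint ``atoms'' (one for each off-diagonal pair of indices combined with a single group element, plus one per diagonal entry) and compensating each atom independently by an appropriate diagonal contribution.

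First, exploiting $M=M^*$, I would write
\[
M=\sum_{i=1}^n E_{i,i}\otimes M_{i,i}+\sum_{i<j}\bigl(E_{i,j}\otimes M_{i,j}+E_{j,i}\otimes M_{i,j}^*\bigr),
\]
and expand each $M_{i,j}=\sum_{g}\lambda^{i,j}_g\,g$. The key computation for off-diagonal pairs is the following: for fixed $i<j$ and $g\in G$, take the row $v\in\mathbb{M}_{1\times n}(\mathbb{R}G)$ with $v_i=1$, $v_j=\epsilon g$ (where $\epsilon=\operatorname{sign}\lambda^{i,j}_g$) and zeros elsewhere. A direct expansion gives
\[
v^*v=E_{i,i}+E_{j,j}+\epsilon\bigl(E_{i,j}g+E_{j,i}g^{-1}\bigr),
\]
so $|\lambda^{i,j}_g|\cdot v^*v$ is a single square realising $\lambda^{i,j}_g(E_{i,j}g+E_{j,i}g^{-1})+|\lambda^{i,j}_g|(E_{i,i}+E_{j,j})$ as SOS. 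Summing over $g$ yields that $E_{i,j}\otimes M_{i,j}+E_{j,i}\otimes M_{i,j}^*+\|M_{i,j}\|_1(E_{i,i}+E_{j,j})$ is a sum of squares in $\mathbb{M}_n(\mathbb{R}G)$. For the diagonal entries, the standard scalar argument (pair $g$ with $g^{-1}$ through $(1\pm g)^*(1\pm g)=2\pm(g+g^{-1})$, and handle involutions $g=g^{-1}$ separately) shows that $M_{i,i}+\|M_{i,i}\|_1\in\mathbb{R}G$ is SOS, and embedding through $E_{i,i}$ lifts this to $\mathbb{M}_n(\mathbb{R}G)$.

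Summing all these SOS pieces reconstructs $M$ plus a diagonal compensation whose $i$-th entry equals
\[
\|M_{i,i}\|_1+\sum_{j\neq i}\|M_{i,j}\|_1=\|M_i\|_1,
\]
where I use $\|M_{i,j}\|_1=\|M_{j,i}\|_1$ and the fact that each unordered pair $\{i,j\}$ contributes exactly to the two slots $E_{i,i}$ and $E_{j,j}$. Hence $M+\operatorname{diag}(\|M_1\|_1,\ldots,\|M_n\|_1)$ is a sum of squares. Finally, $\alpha_M I_n-\operatorname{diag}(\|M_1\|_1,\ldots,\|M_n\|_1)$ is a non-negative diagonal matrix, so it is trivially a sum of squares, and adding the two decompositions gives $M+\alpha_M I_n$ as SOS, as claimed. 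No step is genuinely hard; the main subtlety is the combinatorial bookkeeping ensuring that the off-diagonal compensations accumulate to the row norm $\|M_i\|_1$ at slot $i$, and not to a cruder quantity like the total $1$-norm of $M$---this is precisely what makes the maximum row norm the right order-unit bound.
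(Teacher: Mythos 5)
Your proof is correct, and its overall skeleton (split $M$ into diagonal entries and self-adjoint off-diagonal pairs, compensate each piece by diagonal $1$-norm contributions, observe that the compensations at slot $i$ add up to $\|M_i\|_1$, then pass to the maximum) is the same as the paper's. The genuine difference is in how the key inequality $E_{i,j}\otimes M_{i,j}+E_{j,i}\otimes M_{i,j}^*+\|M_{i,j}\|_1(E_{i,i}+E_{j,j})\geq 0$ is established. You refine the decomposition down to one atom per group element and exhibit each compensated atom as an explicit rank-one square $|\lambda^{i,j}_g|\,v^*v$ with $v=e_i+\epsilon g\,e_j$; this is more elementary, needs the scalar fact only for the diagonal entries, and avoids any division by $\|m_{i,j}\|_1$ (so the degenerate case $m_{i,j}=0$ needs no separate comment). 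The paper instead completes the square at the level of the whole block: it forms $N_{i,j}=M_{i,j}+\|m_{i,j}\|_1(E_{i,i}+E_{j,j})$, expands $\frac{1}{2\|m_{i,j}\|_1}N_{i,j}^*N_{i,j}$, and then invokes the scalar lemma a second time to absorb the residual diagonal terms $\frac{\|m_{i,j}\|_1^2+m_{i,j}m_{i,j}^*}{2\|m_{i,j}\|_1}$ and its transpose-counterpart, using $\|m_{i,j}m_{i,j}^*\|_1\leq\|m_{i,j}\|_1^2$. The two routes yield exactly the same compensation constants, so neither gives a sharper bound; yours produces more but simpler squares (one per pair and group element), while the paper's produces one large square per off-diagonal block at the cost of the normalization trick and a second pass through the scalar argument. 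Your bookkeeping step, including $\|M_{i,j}\|_1=\|M_{j,i}\|_1$ and the final padding by the non-negative diagonal matrix $\alpha_M I_n-\diag(\|M_1\|_1,\ldots,\|M_n\|_1)$, matches the paper's conclusion.
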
 

\begin{proof} 
    First, let us consider a $*$-invariant $x \in \mathbb{R}G$. In this case $x = \sum_{g \in G} c_g g$ and, by our assumption, $c_g = c_{g^{-1}}$. This allows us to write $x$ in the following form
    $$
    x = \sum_{g \in G} \frac{c_g}{2}(g + g^{-1}),
    $$
    and by adding its first norm we obtain
    $$
    x + ||x||_1 = x + \sum_{g \in G} |c_g| = \sum_{g \in G} \frac{|c_g|}{2}( \pm g \pm g^{-1} + 2) = \sum_{g \in G} \frac{|c_g|}{2}(1 \pm g)^*(1 \pm g) \geq 0.
    $$

    Let $E_{i,j}$ be the matrix from $\mathbb{M}_n(\mathbb{R}G)$ with the identity at entry $(i,j)$ and $0$ elsewhere. Then any $\ast$-invariant matrix $M = [m_{i,j}]_{1\leq i,j\leq n} \in \mathbb{M}_n(\mathbb{R}G)$ can be decomposed into a sum of matrices of the two forms.

    \begin{itemize}
        \item For $i < j$:
        $$
        M_{i,j} = m_{i,j} \cdot E_{i,j} + m_{i,j}^* \cdot E_{j,i}, \textrm{ for $m_{i,j} \in \mathbb{R}G$.}
        $$
        \item On the diagonal:
        $$
        M_{i,i} = m_{i,i} \cdot E_{i,i}, \textrm{ for $*$-invariant $m_{i,i} \in \mathbb{R}G$.}
        $$
    \end{itemize}

    By the previous argumentation $M_{i,i} + ||m_{i,i}||_1 \cdot E_{i,i}$ admits a decomposition as a sum of squares. Moreover, by introducing
    $$
    N_{i,j} = M_{i,j} + ||m_{i,j}||_1 \cdot E_{i,i} + ||m_{i,j}||_1 \cdot E_{j,j},
    $$
    and by the formula $E_{i,j} \cdot E_{k,l} = \delta_{j,k} \cdot E_{i,l}$, we obtain
    $$
    M_{i,j} = \frac{1}{2||m_{i,j}||_1}N_{i,j}^*N_{i,j} - \frac{||m_{i,j}||_1^2 + m_{i,j}m_{i,j}^*}{2||m_{i,j}||_1} \cdot E_{i,i} -  \frac{||m_{i,j}||_1^2 + m_{i,j}^*m_{i,j}}{2||m_{i,j}||_1} \cdot E_{j,j}.
    $$
    Now, it is easy to observe that the last two summands are $*$-invariant, and since 
    \[
    ||m_{i,j}^*m_{i,j}||_1, ||m_{i,j}m_{i,j}^*||_1 \leq ||m_{i,j}||_1^2,
    \]
    the first norm of their non-zero entries is not greater than $||m_{i,j}||_1$. Therefore, proceeding as earlier, we add to them the same single-entry matrix multiplied by the norm of the entry, obtaining
    $$
    M_{i,j} + ||m_{i,j}||_1 \cdot E_{i,i} + ||m_{i,j}||_1 \cdot E_{j,j} \geq 0.
    $$
    Hence, in order to obtain a non-negative element from $M$, we need to add $E_{i,i}$ for all possible summands $M_{i,j}$ and $M_{j,i}$ i.e. $E_{i,i}$ scaled by the sum of the first norms $||m_{i,j}||_1$ for $i \leq j$ and $||m_{j,i}||_1$ for $j<i$, which in the case of $*$-invariant matrix is equal to the first norm of the $i$-th row:
    \begin{align*}
         \sum_{i=1}^n \left(M_{i,i} + ||m_{i,i}||_1 \cdot E_{i,i}\right) + \sum_{1\leq i<j \leq n}^n \left(M_{i,j} + ||m_{i,j}||_1 \cdot E_{i,i} + ||m_{i,j}||_1 \cdot E_{j,j}\right) = \\
        = M + \sum_{i=1}^n \left( \left(\sum_{i \leq j} ||m_{i,j}||_1 + \sum_{j < i} ||m_{j,i}||_1 \right) \cdot E_{i,i}\right) = M + \sum_{i = 1}^n ||M_i||_1 \cdot E_{i,i}.
    \end{align*}
    This implies that by adding the identity matrix multiplied by the maximum of the norms between the rows, we obtain a matrix that admits a sum of squares decomposition.

\end{proof}

\section*{Acknowledgments}

Both authors were partially supported by the National Science Center of Poland SONATINA 6 \emph{Algebraic spectral gaps in group cohomology} (grant agreement no.
UMO-2022/44/C/ST1/00037). Additionally, the second author was also partially supported by the National Science Center of Poland MAESTRO 13 \emph{Analysis on Groups} (grant agreement no. UMO-2021/42/A/ST1/00306).



\bibliographystyle{alpha}
\bibliography{sp_2n_z}

\end{document}